\newtheorem{thm}{Theorem}[section]
\newtheorem{cor}[thm]{Corollary}
\newtheorem{lem}[thm]{Lemma}
\theoremstyle{definition}
\newtheorem{defin}[thm]{Definition}
\numberwithin{equation}{section}
\begin{document}


\baselineskip=17pt



\title[Morrey spaces related to nonnegative potentials]{Morrey spaces related to certain nonnegative potentials and fractional integrals on the Heisenberg groups}

\author[H. Wang]{Hua Wang}
\address{College of Mathematics and Econometrics, Hunan University, Changsha, 410082, P. R. China\\
\&~Department of Mathematics and Statistics, Memorial University, St. John's, NL A1C 5S7, Canada}
\email{wanghua@pku.edu.cn}
\date{}

\begin{abstract}
Let $\mathcal L=-\Delta_{\mathbb H^n}+V$ be a Schr\"odinger operator on the Heisenberg group $\mathbb H^n$, where $\Delta_{\mathbb H^n}$ is the sub-Laplacian on $\mathbb H^n$ and the nonnegative potential $V$ belongs to the reverse H\"older class $RH_s$ with $s\geq Q/2$. Here $Q=2n+2$ is the homogeneous dimension of $\mathbb H^n$. For given $\alpha\in(0,Q)$, the fractional integrals associated to the Schr\"odinger operator $\mathcal L$ is defined by $\mathcal I_{\alpha}={\mathcal L}^{-{\alpha}/2}$. In this article, we first introduce the Morrey space $L^{p,\kappa}_{\rho,\infty}(\mathbb H^n)$ and weak Morrey space $WL^{p,\kappa}_{\rho,\infty}(\mathbb H^n)$ related to the nonnegative potential $V$. Then we establish the boundedness of fractional integrals ${\mathcal L}^{-{\alpha}/2}$ on these new spaces. Furthermore, in order to deal with certain extreme cases, we also introduce the spaces $\mathrm{BMO}_{\rho,\infty}(\mathbb H^n)$ and $\mathcal{C}^{\beta}_{\rho,\infty}(\mathbb H^n)$ with exponent $\beta\in(0,1]$.
\end{abstract}
\subjclass[2010]{Primary 42B20; 35J10; Secondary 22E25; 22E30}
\keywords{Schr\"odinger operator; fractional integrals; Heisenberg group; Morrey spaces; reverse H\"older class}

\maketitle

\section{Introduction}

\subsection{Heisenberg group $\mathbb H^n$}
The \emph{Heisenberg group} $\mathbb H^n$ is a nilpotent Lie group with underlying manifold $\mathbb C^n\times\mathbb R$. The group structure (the multiplication law) is given by
\begin{equation*}
(z,t)\cdot(z',t'):=\Big(z+z',t+t'+2\mathrm{Im}(z\cdot\overline{z'})\Big),
\end{equation*}
where $z=(z_1,z_2,\dots,z_n)$, $z'=(z_1',z_2',\dots,z_n')\in\mathbb C^n$, and
\begin{equation*}
z\cdot\overline{z'}:=\sum_{j=1}^nz_j\overline{z_j'}.
\end{equation*}
It can be easily seen that the inverse element of $u=(z,t)$ is $u^{-1}=(-z,-t)$, and the identity is the origin $(0,0)$. The Lie algebra of left-invariant vector fields on $\mathbb H^n$ is spanned by
\begin{equation*}
\begin{cases}
X_j=\displaystyle\frac{\partial}{\partial x_j}+2y_j\frac{\partial}{\partial t},\quad j=1,2,\dots,n,&\\
Y_j=\displaystyle\frac{\partial}{\partial y_j}-2x_j\frac{\partial}{\partial t},\quad j=1,2,\dots,n,&\\
T=\displaystyle\frac{\partial}{\partial t}.&
\end{cases}
\end{equation*}
All non-trivial commutation relations are given by
\begin{equation*}
[X_j,Y_j]=-4T,\quad j=1,2,\dots,n.
\end{equation*}
The sub-Laplacian $\Delta_{\mathbb H^n}$ is defined by
\begin{equation*}
\Delta_{\mathbb H^n}:=\sum_{j=1}^n\big(X_j^2+Y_j^2\big).
\end{equation*}
The dilations on $\mathbb H^n$ have the following form
\begin{equation*}
\delta_a(z,t):=(az,a^2t),\quad a>0.
\end{equation*}
For given $(z,t)\in\mathbb H^n$, the \emph{homogeneous norm} of $(z,t)$ is given by
\begin{equation*}
|(z,t)|=\big(|z|^4+t^2\big)^{1/4}.
\end{equation*}
Observe that $|(z,t)^{-1}|=|(z,t)|$ and
\begin{equation*}
\big|\delta_a(z,t)\big|=\big(|az|^4+(a^2t)^2\big)^{1/4}=a|(z,t)|.
\end{equation*}
In addition, this norm $|\cdot|$ satisfies the triangle inequality and leads to a left-invariant distant $d(u,v)=\big|u^{-1}\cdot v\big|$ for $u=(z,t)$, $v=(z',t')\in\mathbb H^n$. The ball of radius $r$ centered at $u$ is denoted by
\begin{equation*}
B(u,r)=\big\{v\in\mathbb H^n:d(u,v)<r\big\}.
\end{equation*}
The Haar measure on $\mathbb H^n$ coincides with the Lebesgue measure on $\mathbb R^{2n}\times\mathbb R$. The measure of any measurable set $E\subset\mathbb H^n$ is denoted by $|E|$. For $(u,r)\in\mathbb H^n\times(0,\infty)$, it can be shown that the volume of $B(u,r)$ is
\begin{equation*}
|B(u,r)|=r^{Q}\cdot|B(0,1)|,
\end{equation*}
where $Q:=2n+2$ is the \emph{homogeneous dimension} of $\mathbb H^n$ and $|B(0,1)|$ is the volume of the unit ball in $\mathbb H^n$. A direct calculation shows that
\begin{equation*}
|B(0,1)|=\frac{2\pi^{n+\frac{\,1\,}{2}}\Gamma(\frac{\,n\,}{2})}{(n+1)\Gamma(n)\Gamma(\frac{n+1}{2})}.
\end{equation*}
Given a ball $B=B(u,r)$ in $\mathbb H^n$ and $\lambda>0$, we shall use the notation $\lambda B$ to denote $B(u,\lambda r)$. Clearly, we have
\begin{equation}\label{homonorm}
|B(u,\lambda r)|=\lambda^{Q}\cdot|B(u,r)|.
\end{equation}
For more information about the harmonic analysis on the Heisenberg groups, we refer the reader to \cite[Chapter XII]{stein2} and \cite{thangavelu}.

Let $V:\mathbb H^n\rightarrow\mathbb R$ be a nonnegative locally integrable function that belongs to the \emph{reverse H\"older class} $RH_s$ for some exponent $1<s<\infty$; i.e., there exists a positive constant $C>0$ such that the following reverse H\"older inequality
\begin{equation*}
\left(\frac{1}{|B|}\int_B V(w)^s\,dw\right)^{1/s}\leq C\left(\frac{1}{|B|}\int_B V(w)\,dw\right)
\end{equation*}
holds for every ball $B$ in $\mathbb H^n$. For given $V\in RH_s$ with $s\geq Q/2$, we introduce the \emph{critical radius function} $\rho(u)=\rho(u;V)$ which is given by
\begin{equation}\label{rho}
\rho(u):=\sup\bigg\{r>0:\frac{1}{r^{Q-2}}\int_{B(u,r)}V(w)\,dw\leq1\bigg\},\quad u\in\mathbb H^n,
\end{equation}
where $B(u,r)$ denotes the ball in $\mathbb H^n$ centered at $u$ and with radius $r$. It is well known that this auxiliary function satisfies $0<\rho(u)<\infty$ for any $u\in\mathbb H^n$ under the above assumption on $V$ (see \cite{lu}). We need the following known result concerning the critical radius function \eqref{rho}.
\begin{lem}[\cite{lu}]\label{N0}
If $V\in RH_s$ with $s\geq Q/2$, then there exist constants $C_0\geq 1$ and $N_0>0$ such that for all $u$ and $v$ in $\mathbb H^n$,
\begin{equation}\label{com}
\frac{\,1\,}{C_0}\left(1+\frac{|v^{-1}u|}{\rho(u)}\right)^{-N_0}\leq\frac{\rho(v)}{\rho(u)}\leq C_0\left(1+\frac{|v^{-1}u|}{\rho(u)}\right)^{\frac{N_0}{N_0+1}}.
\end{equation}
\end{lem}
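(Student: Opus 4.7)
I would follow the classical Shen approach adapted to the Heisenberg group, based on three pillars: Gehring-type self-improvement of the reverse Hölder class, a scaling inequality for averages on concentric balls, and the characterization of $\rho(u)$ as a critical radius.

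First, self-improvement (valid on spaces of homogeneous type, hence on $\mathbb H^n$) upgrades $V \in RH_s$ with $s \geq Q/2$ to $V \in RH_{s_0}$ for some $s_0 > Q/2$, giving the strictly positive exponent $\tau := 2 - Q/s_0$. Hölder's inequality combined with reverse Hölder on the larger ball then yields, for concentric $B(u, r_1) \subset B(u, r_2)$,
$$\frac{1}{r_1^{Q-2}}\int_{B(u,r_1)}V(w)\,dw \;\leq\; C\Bigl(\frac{r_1}{r_2}\Bigr)^{\tau}\frac{1}{r_2^{Q-2}}\int_{B(u,r_2)}V(w)\,dw.$$
Continuity in $r$ of the left-hand side, together with \eqref{rho}, forces the critical-radius identity $\rho(u)^{2-Q}\int_{B(u,\rho(u))}V = 1$, whence the sharp growth $\int_{B(u,r)}V \gtrsim r^{Q-2+\tau}/\rho(u)^\tau$ for $r \geq \rho(u)$ (with matching decay for $r \leq \rho(u)$).

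Next, I would establish the implicit upper estimate $\rho(v) \leq C\rho(u)(1 + d/\rho(v))^{N_0}$, where $d := |v^{-1}u|$ and $N_0$ depends only on $\tau$. The tools are the containment $B(u, R - d) \subset B(v, R)$ (for $R := \rho(v) > d$), the identity $\int_{B(v,R)}V = R^{Q-2}$, and the growth of $\int_{B(u,\cdot)}V$ past $\rho(u)$ from the previous step. A case split according as $d \lesssim \rho(v)$ or $d \gtrsim \rho(v)$ is required; in the former the growth alone closes the estimate, while in the latter one additionally invokes the scaling inequality on $B(u, R+d) \supset B(v,R)$ to handle $R-d$ being comparable to $\rho(u)$.

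Finally, I would convert to the explicit bounds stated in the lemma. Setting $x = \rho(v)/\rho(u)$ and $y = d/\rho(u)$, so that $d/\rho(v) = y/x$, the implicit estimate becomes $x^{N_0+1} \leq C(x+y)^{N_0}$, and splitting into $y \geq x$ and $y < x$ yields $x \leq C(1+y)^{N_0/(N_0+1)}$, which is precisely the stated upper bound. The lower bound follows from the implicit estimate applied with the roles of $u$ and $v$ exchanged: $\rho(u) \leq C\rho(v)(1+d/\rho(u))^{N_0}$ rearranges at once to $\rho(v)/\rho(u) \geq (1/C)(1+d/\rho(u))^{-N_0}$. The hard step is the middle one: a naive comparison of integrals via ball containment alone gives only $\rho(v) \leq C(\rho(u)+d)$, i.e.\ linear dependence on $d$, which is strictly weaker than the implicit estimate needed; extracting the sharp exponent $N_0$ requires full use of the self-improved Hölder exponent $\tau$ through the scaling inequality, and careful handling of the regime $d \gtrsim \rho(v)$.
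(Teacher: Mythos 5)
The paper itself offers no proof of this lemma --- it is quoted from Lu \cite{lu} (and from Shen \cite{shen} in the Euclidean case) --- so your proposal has to be measured against the classical argument in those references. Your overall architecture coincides with it: Gehring self-improvement producing the exponent $\tau=2-Q/s_0>0$, the scaling inequality on concentric balls, the identity $\rho(u)^{2-Q}\int_{B(u,\rho(u))}V\,dw=1$, a one-sided implicit estimate, and the concluding algebra $x^{N_0+1}\le C(x+y)^{N_0}\Rightarrow x\le C'(1+y)^{N_0/(N_0+1)}$ combined with the $u\leftrightarrow v$ swap. Your first and third steps are correct as written.

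The gap is in the middle step, exactly in the regime you yourself flag as hard, $d\gtrsim\rho(v)$ (equivalently $d\gg\rho(u)$). To bound $\rho(v)$ from above by some $T$ you must show that $t^{2-Q}\int_{B(v,t)}V\,dw>1$ for $t$ near $T$, i.e.\ you need a \emph{lower} bound on $\int_{B(v,t)}V\,dw$; the only way to import one from data at $u$ is the containment $B(u,t-d)\subset B(v,t)$, which is vacuous for $t\le d$. But in this regime the true bound is $\rho(v)\lesssim\rho(u)^{1/(N_0+1)}d^{N_0/(N_0+1)}\ll d$, so the radii you need to test are precisely those $t\le d$ about which containment says nothing. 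The reverse containment $B(v,R)\subset B(u,R+d)$ you invoke only yields $R^{Q-2}\le\int_{B(u,R+d)}V\,dw$, which bounds $\rho(v)$ by a quantity growing superlinearly in $d$ and is useless here. The correct mechanism (Shen's Lemma 1.4(b), reproduced by Lu) is to prove the \emph{lower} bound $\rho(v)\ge c\,\rho(u)(1+d/\rho(u))^{-N_0}$ directly: (i) with $S=\rho(u)+d$ one has $B(v,S)\subset B(u,2S)$, and the doubling property of the measure $V\,dw$ --- a consequence of $RH_s$ that your sketch never records, and which is indispensable because your growth/decay estimates control $\int_{B(u,r)}V\,dw$ from above only for $r\le\rho(u)$ --- gives $S^{2-Q}\int_{B(v,S)}V\,dw\le C(1+d/\rho(u))^{k_0}$; (ii) the scaling inequality, now centered at $v$, shrinks the radius from $S$ to $t=\epsilon S(1+d/\rho(u))^{-k_0/\tau}$, at which the normalized average drops below $1$, whence $\rho(v)\ge t\ge c\rho(u)(1+d/\rho(u))^{1-k_0/\tau}$. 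Your implicit estimate is exactly this inequality with $u$ and $v$ interchanged, and your final algebra then goes through verbatim. So the skeleton is right, but the engine of the hard case --- radius-shrinking via the scaling inequality at the \emph{other} point, powered by doubling --- is absent from your sketch and cannot be replaced by the ball-containment comparisons you list.
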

Lemma \ref{N0} is due to Lu \cite{lu}. In the setting of $\mathbb R^n$, this result was given by Shen in \cite{shen}. As a straightforward consequence of \eqref{com}, we can see that for each integer $k\geq1$, the following estimate
\begin{equation}\label{com2}
1+\frac{2^kr}{\rho(v)}\geq \frac{1}{C_0}\left(1+\frac{r}{\rho(u)}\right)^{-\frac{N_0}{N_0+1}}\left(1+\frac{2^kr}{\rho(u)}\right)
\end{equation}
holds for any $v\in B(u,r)$ with $u\in\mathbb H^n$ and $r>0$, $C_0$ is the same as in \eqref{com}.

\subsection{Fractional integrals}
First we recall the fractional power of the Laplacian operator on $\mathbb R^n$. For given $\alpha\in(0,n)$, the classical fractional integral operator $I^{\Delta}_{\alpha}$ (also referred to as the Riesz potential) is defined by
\begin{equation*}
I^{\Delta}_{\alpha}(f):=(-\Delta)^{-\alpha/2}(f),
\end{equation*}
where $\Delta$ is the Laplacian operator on $\mathbb R^n$. If $f\in\mathcal S(\mathbb R^n)$, then by virtue of the Fourier transform, we have
\begin{equation*}
\widehat{I^{\Delta}_{\alpha}f}(\xi)=(2\pi|\xi|)^{-\alpha}\widehat{f}(\xi),\quad \forall\,\xi\in\mathbb R^n.
\end{equation*}
Comparing this to the Fourier transform of $|x|^{-\alpha}$, $0<\alpha<n$, we are led to redefine the fractional integral operator $I^{\Delta}_{\alpha}$ by
\begin{equation}\label{frac}
I^{\Delta}_{\alpha}f(x):=\frac{1}{\gamma(\alpha)}\int_{\mathbb R^n}\frac{f(y)}{|x-y|^{n-\alpha}}\,dy,
\end{equation}
where
\begin{equation*}
\gamma(\alpha)=\frac{\pi^{\frac{n}{\,2\,}}2^\alpha\Gamma(\frac{\alpha}{\,2\,})}{\Gamma(\frac{n-\alpha}{2})}
\end{equation*}
with $\Gamma(\cdot)$ being the usual gamma function. It is well-known that the Hardy-Littlewood-Sobolev theorem states that the fractional integral operator $I^{\Delta}_{\alpha}$ is bounded from $L^p(\mathbb R^n)$ to $L^q(\mathbb R^n)$ for $0<\alpha<n$, $1<p<n/{\alpha}$ and $1/q=1/p-{\alpha}/n$. Also we know that $I^{\Delta}_{\alpha}$ is bounded from $L^1(\mathbb R^n)$ to $WL^q(\mathbb R^n)$ for $0<\alpha<n$ and $q=n/{(n-\alpha)}$ (see \cite{stein}).

Next we are going to discuss the fractional integrals on the Heisenberg group. For given $\alpha\in(0,Q)$ with $Q=2n+2$, the fractional integral operator $I_{\alpha}$ (also referred to as the Riesz potential) is defined by (see \cite{xiao})
\begin{equation}\label{frac2}
I_{\alpha}(f):=(-\Delta_{\mathbb H^n})^{-\alpha/2}(f),
\end{equation}
where $\Delta_{\mathbb H^n}$ is the sub-Laplacian on $\mathbb H^n$ defined above. Let $f$ and $g$ be integrable functions defined on $\mathbb H^n$. Define the \emph{convolution} $f*g$ by
\begin{equation*}
(f*g)(u):=\int_{\mathbb H^n}f(v)g(v^{-1}u)\,dv.
\end{equation*}
We denote by $H_s(u)$ the convolution kernel of heat semigroup $\big\{T_s=e^{s\Delta_{\mathbb H^n}}:s>0\big\}$. Namely,
\begin{equation*}
e^{s\Delta_{\mathbb H^n}}f(u)=\int_{\mathbb H^n}H_s(v^{-1}u)f(v)\,dv.
\end{equation*}
For any $u=(z,t)\in\mathbb H^n$, it was proved in \cite[Theorem 4.2]{xiao} that $I_{\alpha}$ can be expressed by the following formula:
\begin{equation}\label{frac3}
\begin{split}
I_{\alpha}f(u)&=\frac{1}{\Gamma(\alpha/2)}\int_0^{\infty}e^{s\Delta_{\mathbb H^n}}f(u)\,s^{\alpha/2-1}ds\\
&=\frac{1}{\Gamma(\alpha/2)}\int_0^{\infty}\big(H_s*f\big)(u)\,s^{\alpha/2-1}ds.
\end{split}
\end{equation}
Let $V\in RH_s$ for $s\geq Q/2$. For such a potential $V$, we consider the time independent \emph{Schr\"odinger operator} on $\mathbb H^n$ (see \cite{lin}),
\begin{equation*}
\mathcal L:=-\Delta_{\mathbb H^n}+V,
\end{equation*}
and its associated semigroup
\begin{equation*}
\mathcal T^{\mathcal L}_sf(u):=e^{-s\mathcal L}f(u)=\int_{\mathbb H^n}P_s(u,v)f(v)\,dv,\quad f\in L^2(\mathbb H^n),~s>0,
\end{equation*}
where $P_s(u,v)$ denotes the kernel of the operator $e^{-s\mathcal L},s>0$. For any $u=(z,t)\in\mathbb H^n$, it is well-known that the heat kernel $H_s(u)$ has the explicit expression:
\begin{equation*}
H_s(z,t)=(2\pi)^{-1}(4\pi)^{-n}\int_{\mathbb R}\bigg(\frac{|\lambda|}{\sinh|\lambda|s}\bigg)^n\exp\left\{-\frac{|\lambda||z|^2}{4}\coth|\lambda|s-i\lambda s\right\}d\lambda,
\end{equation*}
and hence it satisfies the following estimate (see \cite{jerison} for instance)
\begin{equation}\label{heatkernel}
0\leq H_s(u)\leq C\cdot s^{-Q/2}\exp\bigg(-\frac{|u|^2}{As}\bigg),
\end{equation}
where the constants $C,A>0$ are independent of $s$ and $u\in\mathbb H^n$. Since $V\geq0$, by the \emph{Trotter product formula} and \eqref{heatkernel}, one has
\begin{equation}\label{heat}
0\leq P_s(u,v)\leq H_s(v^{-1}u)\leq C\cdot s^{-Q/2}\exp\bigg(-\frac{|v^{-1}u|^2}{As}\bigg),\quad s>0.
\end{equation}
Moreover, this estimate \eqref{heat} can be improved when $V$ belongs to the reverse H\"older class $RH_s$ for some $s\geq Q/2$. The auxiliary function $\rho(u)$ arises naturally in this context.
\begin{lem}\label{ker1}
Let $V\in RH_s$ with $s\geq Q/2$, and let $\rho(u)$ be the auxiliary function determined by $V$. For every positive integer $N\geq1$, there exists a positive constant $C_N>0$ such that for all $u$ and $v$ in $\mathbb H^n$,
\begin{equation*}
0\leq P_s(u,v)\leq C_N\cdot s^{-Q/2}\exp\bigg(-\frac{|v^{-1}u|^2}{As}\bigg)\bigg(1+\frac{\sqrt{s\,}}{\rho(u)}+\frac{\sqrt{s\,}}{\rho(v)}\bigg)^{-N},\quad s>0.
\end{equation*}
\end{lem}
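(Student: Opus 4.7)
The plan is to derive the improved bound from the basic Gaussian estimate \eqref{heat} by iterating a perturbation (Duhamel) identity, with each round peeling off one additional factor of $(1+\sqrt{s}/\rho(u))^{-1}$.

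First I would dispose of the trivial regime: when $\sqrt{s}\le\min\{\rho(u),\rho(v)\}$, the prefactor is bounded below by $3^{-N}$, so the claim reduces directly to \eqref{heat}. By the self-adjointness $P_s(u,v)=P_s(v,u)$, one can produce the decay in $\sqrt{s}/\rho(u)$ and in $\sqrt{s}/\rho(v)$ separately; the combined prefactor $(1+\sqrt{s}/\rho(u)+\sqrt{s}/\rho(v))^{-N}$ then follows from the elementary estimate $\min(x^{-N},y^{-N})\le 2^N(x+y)^{-N}$ applied to $x=1+\sqrt{s}/\rho(u)$, $y=1+\sqrt{s}/\rho(v)$, combined with $x+y\ge 1+\sqrt{s}/\rho(u)+\sqrt{s}/\rho(v)$. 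It therefore suffices to establish the one-sided bound
\begin{equation*}
P_s(u,v)\le C_N\cdot s^{-Q/2}\exp\left(-\frac{|v^{-1}u|^2}{As}\right)\left(1+\frac{\sqrt{s\,}}{\rho(u)}\right)^{-N}.
\end{equation*}

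The base case ($N=1$) follows from the perturbation formula
\begin{equation*}
H_s(v^{-1}u)-P_s(u,v)=\int_0^s\!\!\int_{\mathbb H^n}H_{s-t}(w^{-1}u)\,V(w)\,P_t(w,v)\,dw\,dt,
\end{equation*}
obtained by differentiating $e^{t\Delta_{\mathbb H^n}}e^{-(s-t)\mathcal L}$ in $t$ and integrating from $0$ to $s$. Using the trivial domination $P_t\le H_t$, the integrand becomes a product of two Gaussians against $V$. A convexity argument for the exponents extracts an overall factor $s^{-Q/2}\exp(-|v^{-1}u|^2/(A's))$ (for some $A'>A$), leaving a Gaussian in $w$ of width $\sqrt{(s-t)t/s}$ to be integrated against $V$. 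Partitioning $\mathbb H^n$ into the annuli $B(u,2^{k+1}\rho(u))\setminus B(u,2^k\rho(u))$ and invoking the standard consequence of $V\in RH_s$ that $\int_{B(u,R)}V(w)\,dw\le CR^{Q-2}(R/\rho(u))^\tau$ for $R\ge\rho(u)$ and a suitable exponent $\tau$, combined with a careful evaluation of the $t$-integral on $[0,s/2]\cup[s/2,s]$, produces the desired factor $\sqrt{s}/\rho(u)$.

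The inductive step re-inserts the bound just proved for $P_t$ back into the perturbation identity and repeats the annular analysis; each round contributes one more factor of $\sqrt{s}/\rho(u)$ at the cost of slightly enlarging the Gaussian constant. After $N$ iterations the cumulative loss is absorbed into the final constant $A$ in the statement. The principal technical obstacle is the bookkeeping around the $t$-integration: without splitting $[0,s]$ and exploiting the symmetry $t\leftrightarrow s-t$, spurious logarithmic factors can appear that would spoil the integer power of $(1+\sqrt{s}/\rho(u))^{-1}$. A secondary subtlety, handled by the comparability \eqref{com2} of Lemma \ref{N0}, is that in the far annuli the local critical radius $\rho(w)$ differs from $\rho(u)$; however the resulting discrepancy costs only admissible polynomial factors already built into $N_0$.
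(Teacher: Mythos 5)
The paper does not actually prove this lemma; it is imported verbatim from Lin and Liu \cite{lin} (their Lemma 7), so your argument has to stand on its own. Your preliminary reductions are fine: the regime $\sqrt{s}\le\min\{\rho(u),\rho(v)\}$ is indeed trivial from \eqref{heat}, the symmetry $P_s(u,v)=P_s(v,u)$ legitimately reduces matters to a one-sided bound, and the Duhamel identity you write is correct. The gap is in the core mechanism. Substituting the crude bound $P_t\le H_t$ into
\begin{equation*}
H_s(v^{-1}u)-P_s(u,v)=\int_0^s\!\!\int_{\mathbb H^n}H_{s-t}(w^{-1}u)\,V(w)\,P_t(w,v)\,dw\,dt
\end{equation*}
gives an \emph{upper} bound for $H_s-P_s$, hence a \emph{lower} bound for $P_s$. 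The lemma asks for an upper bound on $P_s$ that is strictly smaller than $H_s$ once $\sqrt{s}\gtrsim\rho(u)$; for that you would need the Duhamel integral to be bounded \emph{below} by $\big(1-(1+\sqrt{s}/\rho(u))^{-N}\big)H_s$, which your computation does not produce. The test case $V\equiv\lambda$ makes this concrete: there $P_s=e^{-\lambda s}H_s$ and $\rho\sim\lambda^{-1/2}$, and your scheme yields only $P_s\ge(1-\lambda s)H_s$, which is vacuous exactly in the regime $\sqrt{s}\ge\rho$ where decay is claimed. Relatedly, the annular estimate $\int_{B(u,R)}V\le CR^{Q-2}(R/\rho(u))^{\tau}$ \emph{grows} in $R/\rho(u)$, so for $\sqrt{s}\ge\rho(u)$ the quantity extracted from the $w$-integral is $\gtrsim 1$ rather than a negative power of $\sqrt{s}/\rho(u)$. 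The induction inherits the same directional defect: feeding an improved upper bound for $P_t$ back into the identity only sharpens the lower bound for $P_s$.

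The route actually used by Lin--Liu (following Kurata and Dziuba\'nski--Zienkiewicz in $\mathbb R^n$) has two ingredients your outline lacks. First, a mass-loss estimate $\int_{\mathbb H^n}P_t(u,w)\,dw\le C\big(1+\sqrt{t}/\rho(u)\big)^{-\epsilon}$ for some small $\epsilon>0$; this is precisely where the Duhamel term must be bounded from below, using a lower Gaussian bound for $P_t$ at scales $\sqrt{t}\lesssim\rho$ (or a Feynman--Kac argument) together with the normalization $\rho(u)^{2-Q}\int_{B(u,\rho(u))}V\approx 1$. Second, a bootstrap through the semigroup identity $P_s(u,v)=\int_{\mathbb H^n}P_{s/2}(u,w)P_{s/2}(w,v)\,dw$, splitting the $w$-integral according to whether $|w^{-1}u|$ or $|w^{-1}v|$ is $\ge|v^{-1}u|/2$ so as to retain the Gaussian factor, which upgrades $\epsilon$ to an arbitrary $N$. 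Without these two steps the polynomial decay in $\sqrt{s}/\rho(u)$ cannot be obtained from the perturbation identity alone.
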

This estimate of $P_s(u,v)$ is better than \eqref{heat}, which was given by Lin and Liu in \cite[Lemma 7]{lin}.

Inspired by \eqref{frac2} and \eqref{frac3}, for given $\alpha\in(0,Q)$, the \emph{$\mathcal L$-Fractional integral operator} or \emph{$\mathcal L$-Riesz potential} on the Heisenberg group is defined by (see \cite{jiang} and \cite{jiang2})
\begin{equation*}
\begin{split}
\mathcal I_{\alpha}(f)(u)&:={\mathcal L}^{-{\alpha}/2}f(u)\\
&=\frac{1}{\Gamma(\alpha/2)}\int_0^{\infty}e^{-s\mathcal L}f(u)\,s^{\alpha/2-1}ds.
\end{split}
\end{equation*}
Recall that in the setting of $\mathbb R^n$, this integral operator was first introduced by Dziuba\'{n}ski et al.\cite{dziu}. In this article we shall be interested in the behavior of the fractional integral operator $\mathcal I_{\alpha}$ associated to Schr\"odinger operator on $\mathbb H^n$. For $1\leq p<\infty$, the Lebesgue space $L^p(\mathbb H^n)$ is defined to be the set of all measurable functions $f$ on $\mathbb H^n$ such that
\begin{equation*}
\big\|f\big\|_{L^p(\mathbb H^n)}:=\bigg(\int_{\mathbb H^n}\big|f(u)\big|^p\,du\bigg)^{1/p}<\infty.
\end{equation*}
The weak Lebesgue space $WL^p(\mathbb H^n)$ consists of all measurable functions $f$ on $\mathbb H^n$ such that
\begin{equation*}
\big\|f\big\|_{WL^p(\mathbb H^n)}:=
\sup_{\lambda>0}\lambda\cdot\big|\big\{u\in\mathbb H^n:|f(u)|>\lambda\big\}\big|^{1/p}<\infty.
\end{equation*}
Now we are going to establish strong-type and weak-type estimates of the $\mathcal L$-fractional integral operator $\mathcal I_{\alpha}$ on the Lebesgue spaces. We first claim that the following estimate
\begin{equation}\label{claim}
|\mathcal I_{\alpha}f(u)|\leq C\int_{\mathbb H^n}|f(v)|\frac{1}{|v^{-1}u|^{Q-\alpha}}\,dv=C\big(|f|*|\cdot|^{\alpha-Q}\big)(u)
\end{equation}
holds for all $u\in\mathbb H^n$. Let us verify \eqref{claim}. To do so, denote by $\mathcal K_{\alpha}(u,v)$ the kernel of the fractional integral operator $\mathcal I_{\alpha}$. Then we have
\begin{equation*}
\begin{split}
\int_{\mathbb H^n}\mathcal K_{\alpha}(u,v)f(v)\,dv&=\mathcal I_{\alpha}f(u)={\mathcal L}^{-{\alpha}/2}f(u)\\
&=\frac{1}{\Gamma(\alpha/2)}\int_0^{\infty}e^{-s\mathcal L}f(u)\,s^{\alpha/2-1}ds\\
&=\int_0^{\infty}\bigg[\frac{1}{\Gamma(\alpha/2)}\int_{\mathbb H^n}P_s(u,v)f(v)\,dv\bigg]s^{\alpha/2-1}ds\\
&=\int_{\mathbb H^n}\bigg[\frac{1}{\Gamma(\alpha/2)}\int_0^{\infty}P_s(u,v)\,s^{\alpha/2-1}ds\bigg]f(v)\,dv.
\end{split}
\end{equation*}
Hence,
\begin{equation*}
\mathcal K_{\alpha}(u,v)=\frac{1}{\Gamma(\alpha/2)}\int_0^{\infty}P_s(u,v)\,s^{\alpha/2-1}ds.
\end{equation*}
Moreover, by using \eqref{heat}, we can deduce that
\begin{equation*}
\begin{split}
\big|\mathcal K_{\alpha}(u,v)\big|&\leq\frac{C}{\Gamma(\alpha/2)}\int_0^{\infty}\exp\bigg(-\frac{|v^{-1}u|^2}{As}\bigg)s^{\alpha/2-Q/2-1}ds\\
&\leq\frac{C}{\Gamma(\alpha/2)}\cdot\frac{1}{|v^{-1}u|^{Q-\alpha}}\int_0^{\infty}e^{-t}\,t^{(Q/2-\alpha/2)-1}dt\\
&=C\cdot\frac{\Gamma(Q/2-\alpha/2)}{\Gamma(\alpha/2)}\cdot\frac{1}{|v^{-1}u|^{Q-\alpha}},
\end{split}
\end{equation*}
where in the second step we have used a change of variables. Thus \eqref{claim} holds. According to Theorems 4.4 and 4.5 in \cite{xiao}, we get the Hardy-Littlewood-Sobolev theorem on the Heisenberg group.
\begin{thm}\label{strong}
Let $0<\alpha<Q$ and $1\leq p<Q/{\alpha}$. Define $1<q<\infty$ by the relation $1/q=1/p-{\alpha}/Q$. Then the following statements are valid:
\begin{enumerate}
  \item if $p>1$, then $\mathcal I_{\alpha}$ is bounded from $L^p(\mathbb H^n)$ to $L^q(\mathbb H^n)$;
  \item if $p=1$, then $\mathcal I_{\alpha}$ is bounded from $L^1(\mathbb H^n)$ to $WL^q(\mathbb H^n)$.
\end{enumerate}
\end{thm}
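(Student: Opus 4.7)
Both parts of the theorem will follow from the pointwise domination \eqref{claim} combined with the classical Hardy--Littlewood--Sobolev inequality for the sub-Laplacian Riesz potential $I_\alpha=(-\Delta_{\mathbb H^n})^{-\alpha/2}$ on the Heisenberg group. The idea is simply to transfer the $L^p\to L^q$ and $L^1\to WL^q$ mapping properties of $I_\alpha$, which are Theorems 4.4 and 4.5 of \cite{xiao}, to the Schr\"odinger fractional integral $\mathcal I_\alpha=\mathcal L^{-\alpha/2}$ via the pointwise bound already at hand.

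To do this, I would first observe that \eqref{claim} says
\[
|\mathcal I_\alpha f(u)|\le C\int_{\mathbb H^n}\frac{|f(v)|}{|v^{-1}u|^{Q-\alpha}}\,dv = C'\cdot I_\alpha(|f|)(u),\qquad u\in\mathbb H^n,
\]
the constant $C'$ depending on $\gamma(\alpha)$ and on the constant in \eqref{claim}. In other words, $\mathcal I_\alpha$ is pointwise dominated, up to a universal multiplicative constant, by the classical Riesz potential $I_\alpha$ acting on $|f|$. This reduction is the only place where the potential $V$ enters, through the Gaussian bound \eqref{heat} for $P_s(u,v)$ used earlier to derive \eqref{claim}.

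For part (1), with $1<p<Q/\alpha$ and $1/q = 1/p - \alpha/Q$, Theorem 4.4 of \cite{xiao} asserts $\|I_\alpha g\|_{L^q(\mathbb H^n)} \le C\|g\|_{L^p(\mathbb H^n)}$. Applying this with $g=|f|$ together with the pointwise estimate above yields $\|\mathcal I_\alpha f\|_{L^q(\mathbb H^n)}\le C\|f\|_{L^p(\mathbb H^n)}$. For part (2), with $p=1$ and $q=Q/(Q-\alpha)$, Theorem 4.5 of \cite{xiao} gives the endpoint weak-type bound $\|I_\alpha g\|_{WL^q(\mathbb H^n)}\le C\|g\|_{L^1(\mathbb H^n)}$; since the level-set quasi-norm $\|\cdot\|_{WL^q}$ respects pointwise majorization of nonnegative functions, the same domination produces $\|\mathcal I_\alpha f\|_{WL^q(\mathbb H^n)}\le C\|f\|_{L^1(\mathbb H^n)}$.

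There is no genuine obstacle in this argument: all of the analytic work (the classical fractional integral boundedness on $\mathbb H^n$ and the Gaussian heat kernel estimate) has already been carried out elsewhere, and the pointwise bound \eqref{claim} has just been verified. The one mild subtlety worth flagging is that one must work with $|\mathcal I_\alpha f|$ rather than $\mathcal I_\alpha f$ directly when invoking pointwise majorization in the weak-type step, but this is automatic from \eqref{claim} since the right-hand side is nonnegative.
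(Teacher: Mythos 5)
Your proposal is correct and follows exactly the paper's own route: verify the pointwise domination \eqref{claim} via the Gaussian bound \eqref{heat} on $P_s(u,v)$, and then transfer the $L^p\to L^q$ and $L^1\to WL^q$ bounds from Theorems 4.4 and 4.5 of \cite{xiao} for the convolution with $|\cdot|^{\alpha-Q}$. No further comment is needed.
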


The organization of this paper is as follows. In Section 2, we will give the definitions of Morrey space and weak Morrey space and state our main results: Theorems \ref{mainthm:1}, \ref{mainthm:2} and \ref{mainthm:3}. Section 3 is devoted to proving the boundedness of the fractional integral operator in the context of Morrey spaces. We will study certain extreme cases in Section 4. Throughout this paper, $C$ represents a positive constant that is independent of the main parameters, but may be different from line to line, and a subscript is added when we wish to make clear its dependence on the parameter in the subscript. We also use $a\approx b$ to denote the equivalence of $a$ and $b$; that is, there exist two positive constants $C_1$, $C_2$ independent of $a,b$ such that $C_1a\leq b\leq C_2a$.

\section{Main results}
In this section, we introduce some types of Morrey spaces related to the nonnegative potential $V$ on $\mathbb H^n$, and then give our main results.
\begin{defin}
Let $\rho$ be the auxiliary function determined by $V\in RH_s$ with $s\geq Q/2$. Let $1\leq p<\infty$ and $0\leq\kappa<1$. For given $0<\theta<\infty$, the Morrey space $L^{p,\kappa}_{\rho,\theta}(\mathbb H^n)$ is defined to be the set of all $p$-locally integrable functions $f$ on $\mathbb H^n$ such that
\begin{equation}\label{morrey1}
\bigg(\frac{1}{|B|^{\kappa}}\int_B\big|f(u)\big|^p\,du\bigg)^{1/p}
\leq C\cdot\left(1+\frac{r}{\rho(u_0)}\right)^{\theta}
\end{equation}
for every ball $B=B(u_0,r)$ in $\mathbb H^n$. A norm for $f\in L^{p,\kappa}_{\rho,\theta}(\mathbb H^n)$, denoted by $\|f\|_{L^{p,\kappa}_{\rho,\theta}(\mathbb H^n)}$, is given by the infimum of the constants in \eqref{morrey1}, or equivalently,
\begin{equation*}
\big\|f\big\|_{L^{p,\kappa}_{\rho,\theta}(\mathbb H^n)}:=\sup_{B(u_0,r)}\left(1+\frac{r}{\rho(u_0)}\right)^{-\theta}
\bigg(\frac{1}{|B|^{\kappa}}\int_B\big|f(u)\big|^p\,du\bigg)^{1/p}
<\infty,
\end{equation*}
where the supremum is taken over all balls $B=B(u_0,r)$ in $\mathbb H^n$, $u_0$ and $r$ denote the center and radius of $B$ respectively. Define
\begin{equation*}
L^{p,\kappa}_{\rho,\infty}(\mathbb H^n):=\bigcup_{\theta>0}L^{p,\kappa}_{\rho,\theta}(\mathbb H^n).
\end{equation*}
\end{defin}

\begin{defin}
Let $\rho$ be the auxiliary function determined by $V\in RH_s$ with $s\geq Q/2$. Let $1\leq p<\infty$ and $0\leq\kappa<1$. For given $0<\theta<\infty$, the weak Morrey space $WL^{p,\kappa}_{\rho,\theta}(\mathbb H^n)$ is defined to be the set of all measurable functions $f$ on $\mathbb H^n$ such that
\begin{equation*}
\frac{1}{|B|^{\kappa/p}}\sup_{\lambda>0}\lambda\cdot\big|\big\{u\in B:|f(u)|>\lambda\big\}\big|^{1/p}
\leq C\cdot\left(1+\frac{r}{\rho(u_0)}\right)^{\theta}
\end{equation*}
for every ball $B=B(u_0,r)$ in $\mathbb H^n$, or equivalently,
\begin{equation*}
\big\|f\big\|_{WL^{p,\kappa}_{\rho,\theta}(\mathbb H^n)}:=\sup_{B(u_0,r)}\left(1+\frac{r}{\rho(u_0)}\right)^{-\theta}\frac{1}{|B|^{\kappa/p}}
\sup_{\lambda>0}\lambda\cdot\big|\big\{u\in B:|f(u)|>\lambda\big\}\big|^{1/p}<\infty.
\end{equation*}
Correspondingly, we define
\begin{equation*}
WL^{p,\kappa}_{\rho,\infty}(\mathbb H^n):=\bigcup_{\theta>0}WL^{p,\kappa}_{\rho,\theta}(\mathbb H^n).
\end{equation*}
\end{defin}
Obviously, if we take $\theta=0$ or $V\equiv0$, then this Morrey space (or weak Morrey space) is just the Morrey space $L^{p,\kappa}(\mathbb H^n)$ (or $WL^{p,\kappa}(\mathbb H^n)$), which was defined by Guliyev et al.\cite{guliyev}. Moreover, according to the above definitions, one has
\begin{equation*}
\begin{cases}
L^{p,\kappa}(\mathbb H^n)\subset L^{p,\kappa}_{\rho,\theta_1}(\mathbb H^n)\subset L^{p,\kappa}_{\rho,\theta_2}(\mathbb H^n);&\\
WL^{p,\kappa}(\mathbb H^n)\subset WL^{p,\kappa}_{\rho,\theta_1}(\mathbb H^n)\subset WL^{p,\kappa}_{\rho,\theta_2}(\mathbb H^n),&
\end{cases}
\end{equation*}
for $0<\theta_1<\theta_2<\infty$. Hence $L^{p,\kappa}(\mathbb H^n)\subset L^{p,\kappa}_{\rho,\infty}(\mathbb H^n)$ and $WL^{p,\kappa}(\mathbb H^n)\subset WL^{p,\kappa}_{\rho,\infty}(\mathbb H^n)$ for $(p,\kappa)\in[1,\infty)\times[0,1)$. The space $L^{p,\kappa}_{\rho,\theta}(\mathbb H^n)$ (or $WL^{p,\kappa}_{\rho,\theta}(\mathbb H^n)$) could be viewed as an extension of Lebesgue (or weak Lebesgue) space on $\mathbb H^n$ (when $\kappa=\theta=0$). In this article we will extend the Hardy-Littlewood-Sobolev theorem on $\mathbb H^n$ to the Morrey spaces. We now present our main results as follows.

\begin{thm}\label{mainthm:1}
Let $0<\alpha<Q$, $1<p<Q/{\alpha}$ and $1/q=1/p-{\alpha}/Q$. If $V\in RH_s$ with $s\geq Q/2$ and $0<\kappa<p/q$, then the $\mathcal L$-fractional integral operator $\mathcal I_{\alpha}$ is bounded from $L^{p,\kappa}_{\rho,\infty}(\mathbb H^n)$ into $L^{q,{(\kappa q)}/p}_{\rho,\infty}(\mathbb H^n)$.
\end{thm}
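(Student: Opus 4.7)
The plan is to combine a sharpened pointwise bound for the kernel $\mathcal K_\alpha(u,v)$ of $\mathcal I_\alpha$ with the usual local--global decomposition of $f$ against the ball $B_0 = B(u_0,r)$ that defines the target Morrey norm. Fix $\theta>0$ so that $f\in L^{p,\kappa}_{\rho,\theta}(\mathbb H^n)$; the goal is to produce some $\theta'=\theta'(\theta)$ for which $\mathcal I_\alpha f\in L^{q,\kappa q/p}_{\rho,\theta'}(\mathbb H^n)$. The first step --- and the essential one --- is to upgrade \eqref{claim} by inserting the $(1+\sqrt{s}/\rho(u))^{-N}$ factor from Lemma \ref{ker1} into the argument that produced \eqref{claim} and splitting the $s$-integral at $s=|v^{-1}u|^2$. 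For every $N>0$ this yields
\begin{equation*}
\mathcal K_\alpha(u,v) \le \frac{C_N}{|v^{-1}u|^{Q-\alpha}}\left(1 + \frac{|v^{-1}u|}{\rho(u)}\right)^{-N}.
\end{equation*}

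With this refined bound in hand, decompose $f=f_1+f_2$ with $f_1 = f\chi_{2B_0}$. For the local piece, Theorem \ref{strong} yields $\|\mathcal I_\alpha f_1\|_{L^q(B_0)}\le C\|f\|_{L^p(2B_0)}$, and the Morrey hypothesis applied on $2B_0$ immediately gives the desired
\begin{equation*}
|B_0|^{-\kappa/p}\,\|\mathcal I_\alpha f_1\|_{L^q(B_0)} \le C\left(1+\frac{r}{\rho(u_0)}\right)^{\theta}\|f\|_{L^{p,\kappa}_{\rho,\theta}(\mathbb H^n)},
\end{equation*}
already of the required form with output exponent $\theta$.

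For the global piece I fix $u\in B_0$ and, on each dyadic annulus $2^{k+1}B_0\setminus 2^k B_0$ (where $|v^{-1}u|\approx 2^k r$), apply the sharpened kernel bound, H\"older's inequality in the form $\int_{2^{k+1}B_0}|f|\le|2^{k+1}B_0|^{1/p'}\bigl(\int_{2^{k+1}B_0}|f|^p\bigr)^{1/p}$, and the Morrey condition on the ball $2^{k+1}B_0$. Setting $\beta := Q/q-Q\kappa/p>0$ (positive because $\kappa<p/q$) and using $\alpha=Q/p-Q/q$ to collect the powers of $2^kr$, this produces
\begin{equation*}
|\mathcal I_\alpha f_2(u)| \le C_N\,r^{-\beta}\|f\|_{L^{p,\kappa}_{\rho,\theta}(\mathbb H^n)}\sum_{k=1}^\infty \frac{2^{-k\beta}\left(1+2^{k+1}r/\rho(u_0)\right)^\theta}{\left(1+2^k r/\rho(u)\right)^N}.
\end{equation*}
Inequality \eqref{com2} lets me replace $\rho(u)$ by $\rho(u_0)$ at the cost of an extra factor $(1+r/\rho(u_0))^{NN_0/(N_0+1)}$. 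Taking $N>\theta$ and using $(1+2^{k+1}x)^\theta\le 2^\theta(1+2^k x)^\theta$ shows the remaining $k$-sum is dominated by $\sum_k 2^{-k\beta}<\infty$, so the whole global pointwise estimate reduces to $C r^{-\beta}\,(1+r/\rho(u_0))^{\theta'}\|f\|$ for a suitable $\theta'=\theta'(\theta,N,N_0)$. Integrating $|\mathcal I_\alpha f_2|^q$ over $B_0$ contributes $r^{Q/q}$, and dividing by $|B_0|^{\kappa/p}\sim r^{Q\kappa/p}$ makes the $r$-power cancel exactly because $Q/q-\beta-Q\kappa/p=0$.

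Summing the local and global pieces and taking the supremum over $B_0$ places $\mathcal I_\alpha f$ in $L^{q,\kappa q/p}_{\rho,\theta'}(\mathbb H^n)\subset L^{q,\kappa q/p}_{\rho,\infty}(\mathbb H^n)$, as desired. The main obstacle is the sharpening of $\mathcal K_\alpha$ in the very first step: the crude bound \eqref{claim} alone would require the factor $(1+2^{k+1}r/\rho(u_0))^\theta$ to be tamed purely by the geometric term $2^{-k\beta}$, which fails as soon as $\theta\ge\beta$ (and, indeed, at that level one cannot even guarantee that $\mathcal I_\alpha f$ is pointwise defined). The polynomial decay $(1+|v^{-1}u|/\rho(u))^{-N}$ inherited from Lemma \ref{ker1} is precisely what allows $N$ to be chosen after $\theta$ has been fixed, and it is what controls both the convergence of the series and the size of the output exponent $\theta'$.
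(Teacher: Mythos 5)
Your proposal is correct and follows essentially the same route as the paper: the sharpened kernel bound you derive in the first step is exactly the paper's Lemma \ref{kernel} (proved by the same splitting of the $s$-integral at $|v^{-1}u|^2$ using Lemma \ref{ker1}), and the subsequent $f=f_1+f_2$ decomposition, the use of Theorem \ref{strong} for the local part, and the dyadic-annulus estimate with \eqref{com2} and the choice $N>\theta$ for the global part all match the paper's argument. Your closing remark about why the crude bound \eqref{claim} would not suffice is also the correct motivation for the refined kernel estimate.
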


\begin{thm}\label{mainthm:2}
Let $0<\alpha<Q$, $p=1$ and $q=Q/{(Q-\alpha)}$. If $V\in RH_s$ with $s\geq Q/2$ and $0<\kappa<1/q$, then the $\mathcal L$-fractional integral operator $\mathcal I_{\alpha}$ is bounded from $L^{1,\kappa}_{\rho,\infty}(\mathbb H^n)$ into $WL^{q,(\kappa q)}_{\rho,\infty}(\mathbb H^n)$.
\end{thm}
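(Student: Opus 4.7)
Given $f\in L^{1,\kappa}_{\rho,\infty}(\mathbb H^n)$, fix $\theta>0$ with $f\in L^{1,\kappa}_{\rho,\theta}(\mathbb H^n)$ and an arbitrary ball $B=B(u_0,r)$. The plan is to decompose $f=f_1+f_2$ with $f_1=f\chi_{2B}$ and $f_2=f\chi_{(2B)^c}$, and bound the contributions of $\mathcal I_\alpha f_1$ and $\mathcal I_\alpha f_2$ to the weak quasi-norm on $B$ separately, then take $\theta'$ to be the larger of the two resulting exponents.

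For the local piece, Theorem \ref{strong}(2) applied on all of $\mathbb H^n$ yields $\sup_{\lambda>0}\lambda\,|\{u\in B:|\mathcal I_\alpha f_1(u)|>\lambda/2\}|^{1/q}\le C\|f_1\|_{L^1}=C\int_{2B}|f|$, and the Morrey norm on $2B$ combined with \eqref{homonorm} bounds this by $C\|f\|_{L^{1,\kappa}_{\rho,\theta}}|B|^\kappa(1+r/\rho(u_0))^\theta$.

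For the far piece I would rely on the trivial estimate $\sup_\lambda\lambda\,|\{u\in B:|\mathcal I_\alpha f_2(u)|>\lambda/2\}|^{1/q}\le 2|B|^{1/q}\|\mathcal I_\alpha f_2\|_{L^\infty(B)}$, so the task reduces to controlling $\|\mathcal I_\alpha f_2\|_{L^\infty(B)}$ pointwise. The crude bound \eqref{claim} is not enough: the resulting annular series $\sum_k 2^{-k(Q-\alpha-Q\kappa)}(1+2^{k+1}r/\rho(u_0))^\theta$ diverges once $\theta\ge Q-\alpha-Q\kappa$. To circumvent this I would first sharpen the kernel bound through Lemma \ref{ker1}: writing $\mathcal K_\alpha(u,v)=\Gamma(\alpha/2)^{-1}\int_0^\infty P_s(u,v)\,s^{\alpha/2-1}\,ds$ and splitting the $s$-integral at $s=|v^{-1}u|^2$, the small-$s$ Gaussian piece is controlled as in the proof of \eqref{claim}, while the tail inherits the $(1+\sqrt{s}/\rho(u))^{-N}$ decay. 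This produces, for every integer $N\ge 1$,
\[
|\mathcal K_\alpha(u,v)|\le \frac{C_N}{|v^{-1}u|^{Q-\alpha}}\Bigl(1+\frac{|v^{-1}u|}{\rho(u)}\Bigr)^{-N}.
\]

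With this refined kernel in hand, \eqref{com2} (which replaces $\rho(u)$ with $\rho(u_0)$ at the cost of a factor $(1+r/\rho(u_0))^{NN_0/(N_0+1)}$), the annular decomposition of $(2B)^c$ into $B(u_0,2^{k+1}r)\setminus B(u_0,2^kr)$, and the Morrey control of $\int_{2^{k+1}B}|f|$ together give
\[
|\mathcal I_\alpha f_2(u)|\le C_N\|f\|_{L^{1,\kappa}_{\rho,\theta}}\bigl(1+\tfrac{r}{\rho(u_0)}\bigr)^{NN_0/(N_0+1)}\sum_{k=1}^\infty\frac{(2^{k+1}r)^{Q\kappa}}{(2^kr)^{Q-\alpha}}\cdot\frac{(1+2^{k+1}r/\rho(u_0))^\theta}{(1+2^kr/\rho(u_0))^N}.
\]
Choosing $N>\theta$ makes the weight ratio uniformly bounded, and the residual geometric series converges because $Q\kappa<Q-\alpha$. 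Multiplying the resulting $L^\infty(B)$ bound by $|B|^{1/q}$ balances the powers of $r$ into $|B|^\kappa$ since $Q/q=Q-\alpha$, giving the far-part exponent $\theta'=NN_0/(N_0+1)$. The main technical hurdle is exactly this kernel refinement: the rough pointwise bound \eqref{claim} that was enough for the unweighted Hardy-Littlewood-Sobolev theorem is inadequate in the weighted Morrey setting, and one has to inject the extra $\rho$-decay from Lemma \ref{ker1} in order to close the argument.
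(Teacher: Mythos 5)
Your proposal is correct and follows essentially the same route as the paper: the same $2B$ splitting, the weak-$(1,q)$ estimate of Theorem \ref{strong}(2) for the local part, and for the far part the refined kernel bound $|\mathcal K_{\alpha}(u,v)|\leq C_N\big(1+|v^{-1}u|/\rho(u)\big)^{-N}|v^{-1}u|^{\alpha-Q}$ (this is exactly the paper's Lemma \ref{kernel}) combined with \eqref{com2}, the annular decomposition, the choice $N>\theta$, and convergence from $\kappa<1/q$; your $L^{\infty}(B)$ reduction is the same as the paper's Chebyshev step in \eqref{Tf2pr}. The only loose point is your one-line derivation of the kernel refinement: the small-$s$ piece cannot be handled ``as in \eqref{claim}'' (that would lose the $\rho$-decay there and collapse the bound back to \eqref{claim}); one must instead trade Gaussian decay for powers of $\sqrt{s}/|v^{-1}u|$ and use $\sqrt{s}/|v^{-1}u|\leq(\sqrt{s}+\rho(u))/(|v^{-1}u|+\rho(u))$, as in the paper's proof of Lemma \ref{kernel} --- but the bound you state is correct and the rest of your argument goes through.
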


Before stating our next theorem, we need to introduce a new space $\mathrm{BMO}_{\rho,\infty}(\mathbb H^n)$ defined by
\begin{equation*}
\mathrm{BMO}_{\rho,\infty}(\mathbb H^n):=\bigcup_{\theta>0}\mathrm{BMO}_{\rho,\theta}(\mathbb H^n),
\end{equation*}
where for $0<\theta<\infty$ the space $\mathrm{BMO}_{\rho,\theta}(\mathbb H^n)$ is defined to be the set of all locally integrable functions $f$ satisfying
\begin{equation}\label{BM}
\frac{1}{|B(u_0,r)|}\int_{B(u_0,r)}\big|f(u)-f_{B(u_0,r)}\big|\,du
\leq C\cdot\left(1+\frac{r}{\rho(u_0)}\right)^{\theta},
\end{equation}
for all $u_0\in\mathbb H^n$ and $r>0$, $f_{B(u_0,r)}$ denotes the mean value of $f$ on $B(u_0,r)$, that is,
\begin{equation*}
f_{B(u_0,r)}:=\frac{1}{|B(u_0,r)|}\int_{B(u_0,r)}f(v)\,dv.
\end{equation*}
A norm for $f\in\mathrm{BMO}_{\rho,\theta}(\mathbb H^n)$, denoted by $\|f\|_{\mathrm{BMO}_{\rho,\theta}}$, is given by the infimum of the constants satisfying \eqref{BM}, or equivalently,
\begin{equation*}
\|f\|_{\mathrm{BMO}_{\rho,\theta}}
:=\sup_{B(u_0,r)}\left(1+\frac{r}{\rho(u_0)}\right)^{-\theta}\bigg(\frac{1}{|B(u_0,r)|}\int_{B(u_0,r)}\big|f(u)-f_{B(u_0,r)}\big|\,du\bigg),
\end{equation*}
where the supremum is taken over all balls $B(u_0,r)$ with $u_0\in\mathbb H^n$ and $r>0$. Recall that in the setting of $\mathbb R^n$, the space $\mathrm{BMO}_{\rho,\theta}(\mathbb R^n)$ was first introduced by Bongioanni et al.\cite{bong2} (see also \cite{bong3}).

Moreover, given any $\beta\in[0,1]$, we introduce the space of H\"older continuous functions on $\mathbb H^n$, with exponent $\beta$.
\begin{equation*}
\mathcal{C}^{\beta}_{\rho,\infty}(\mathbb H^n):=\bigcup_{\theta>0}\mathcal{C}^{\beta}_{\rho,\theta}(\mathbb H^n),
\end{equation*}
where for $0<\theta<\infty$ the space $\mathcal{C}^{\beta}_{\rho,\theta}(\mathbb H^n)$ is defined to be the set of all locally integrable functions $f$ satisfying
\begin{equation}\label{hconti}
\frac{1}{|B(u_0,r)|^{1+\beta/Q}}\int_{B(u_0,r)}\big|f(u)-f_{B(u_0,r)}\big|\,du
\leq C\cdot\left(1+\frac{r}{\rho(u_0)}\right)^{\theta},
\end{equation}
for all $u_0\in\mathbb H^n$ and $r\in(0,\infty)$. The smallest bound $C$ for which \eqref{hconti} is satisfied is then taken to be the norm of $f$ in this space and is denoted by $\|f\|_{\mathcal{C}^{\beta}_{\rho,\theta}}$. When $\theta=0$ or $V\equiv0$, $\mathrm{BMO}_{\rho,\theta}(\mathbb H^n)$ and $\mathcal{C}^{\beta}_{\rho,\theta}(\mathbb H^n)$ will be simply written as $\mathrm{BMO}(\mathbb H^n)$ and $\mathcal{C}^{\beta}(\mathbb H^n)$, respectively. Note that when $\beta=0$ this space $\mathcal{C}^{\beta}_{\rho,\theta}(\mathbb H^n)$ reduces to the space $\mathrm{BMO}_{\rho,\theta}(\mathbb H^n)$ mentioned above.

For the case $\kappa\geq p/q$ of Theorem \ref{mainthm:1}, we will prove the following result.
\begin{thm}\label{mainthm:3}
Let $0<\alpha<Q$, $1<p<Q/{\alpha}$ and $1/q=1/p-{\alpha}/Q$. If $V\in RH_s$ with $s\geq Q/2$ and $p/q\leq\kappa<1$, then the $\mathcal L$-fractional integral operator $\mathcal I_{\alpha}$ is bounded from $L^{p,\kappa}_{\rho,\infty}(\mathbb H^n)$ into $\mathcal{C}^{\beta}_{\rho,\infty}(\mathbb H^n)$ with $\beta/Q=\kappa/p-1/q$ and $\beta$ sufficiently small. To be more precise, $\beta<\delta\leq1$ and $\delta$ is given as in Lemma $\ref{kernel2}$.
\end{thm}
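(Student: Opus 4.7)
The plan is to carry out the standard local/global splitting adapted to the ball at which we test the $\mathcal{C}^{\beta}_{\rho,\theta}$ seminorm. Fix $f\in L^{p,\kappa}_{\rho,\theta}(\mathbb H^n)$ for some $\theta>0$ and a ball $B=B(u_0,r)$. Write $f=f_1+f_2$ with $f_1=f\chi_{2B}$ and $f_2=f\chi_{\mathbb H^n\setminus 2B}$, and set $c:=\mathcal I_\alpha f_2(u_0)$. By Jensen's inequality,
\begin{equation*}
\frac{1}{|B|}\int_B\bigl|\mathcal I_\alpha f(u)-(\mathcal I_\alpha f)_B\bigr|\,du\leq \frac{2}{|B|}\int_B\bigl|\mathcal I_\alpha f(u)-c\bigr|\,du,
\end{equation*}
so verifying \eqref{hconti} for $\mathcal I_\alpha f$ reduces to bounding
\begin{equation*}
\mathrm{I}:=\frac{1}{|B|^{1+\beta/Q}}\int_B|\mathcal I_\alpha f_1(u)|\,du,\quad \mathrm{II}:=\frac{1}{|B|^{1+\beta/Q}}\int_B\bigl|\mathcal I_\alpha f_2(u)-\mathcal I_\alpha f_2(u_0)\bigr|\,du
\end{equation*}
by $C(1+r/\rho(u_0))^{\theta'}\|f\|_{L^{p,\kappa}_{\rho,\theta}}$ for some $\theta'>0$.

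For $\mathrm{I}$ I would apply Hölder's inequality in $L^q$, the $L^p\to L^q$ boundedness of $\mathcal I_\alpha$ (which follows from \eqref{claim} together with Theorem \ref{strong}), and the Morrey condition applied to $2B$, obtaining $\mathrm{I}\leq C|B|^{-1/q-\beta/Q+\kappa/p}(1+r/\rho(u_0))^\theta\|f\|_{L^{p,\kappa}_{\rho,\theta}}$; the exponent of $|B|$ vanishes precisely because $\beta/Q=\kappa/p-1/q$. For $\mathrm{II}$ the key input is Lemma \ref{kernel2}, which I take to yield a Hölder-type kernel estimate of the form
\begin{equation*}
|\mathcal K_\alpha(u,v)-\mathcal K_\alpha(u_0,v)|\leq C_N\,\frac{|u_0^{-1}u|^{\delta}}{|v^{-1}u_0|^{Q-\alpha+\delta}}\Bigl(1+\tfrac{|v^{-1}u_0|}{\rho(u_0)}\Bigr)^{-N}
\end{equation*}
whenever $2|u_0^{-1}u|\leq|v^{-1}u_0|$, for an arbitrary exponent $N>0$. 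Dyadically decomposing $(2B)^c=\bigcup_{j\geq 1}(2^{j+1}B\setminus 2^jB)$, bounding $|u_0^{-1}u|\leq r$ for $u\in B$, applying Hölder in each annulus, and invoking the Morrey condition on $2^{j+1}B$ yields
\begin{equation*}
|\mathcal I_\alpha f_2(u)-\mathcal I_\alpha f_2(u_0)|\leq C_N\,r^{\delta}\|f\|_{L^{p,\kappa}_{\rho,\theta}}\sum_{j=1}^\infty(2^jr)^{\beta-\delta}\Bigl(1+\tfrac{2^{j+1}r}{\rho(u_0)}\Bigr)^{\theta-N},
\end{equation*}
where the scale exponent $\beta-\delta$ results from direct bookkeeping using $\beta=\alpha-Q/p+Q\kappa/p$. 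Choosing $N>\theta$ makes the $\rho$-factor bounded by $1$, and since $\beta<\delta$ the geometric series is summable, giving $\mathrm{II}\leq C\|f\|_{L^{p,\kappa}_{\rho,\theta}}$.

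Adding the two contributions gives $\mathcal I_\alpha f\in\mathcal{C}^{\beta}_{\rho,\theta}(\mathbb H^n)\subset\mathcal{C}^{\beta}_{\rho,\infty}(\mathbb H^n)$ with the desired norm control. The delicate step is the global part: the hypothesis $\beta<\delta$ is exactly what guarantees convergence of the dyadic series once the geometric factor $(2^jr)^{\beta-\delta}$ is extracted, while the freedom in Lemma \ref{kernel2} to take $N$ arbitrarily large is what absorbs the growth $(1+2^{j+1}r/\rho(u_0))^{\theta}$ produced by applying the $L^{p,\kappa}_{\rho,\theta}$ condition on ever-larger annuli. A secondary technicality is the careful use of \eqref{com}--\eqref{com2} to keep all $\rho$-factors anchored at the base point $u_0$ rather than at shifted centers of the enlarged balls.
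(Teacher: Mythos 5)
Your proposal is correct and follows essentially the same route as the paper: the same local/global splitting, the same use of Theorem \ref{strong} plus the relation $\beta/Q=\kappa/p-1/q$ for the local part, and the same use of the H\"older-continuity kernel estimate of Lemma \ref{kernel2} together with \eqref{com2}, a dyadic decomposition, and the condition $\beta<\delta$ for the global part. The only cosmetic differences are that you compare $\mathcal I_\alpha f_2$ to its value at the center $u_0$ rather than averaging over a second variable in $B$, and you cut at $2B$ rather than $4B$; also note that anchoring the $\rho$-factor at $u_0$ via \eqref{com2} does leave a residual $(1+r/\rho(u_0))^{N N_0/(N_0+1)}$ in the bound for $\mathrm{II}$, which is harmless since the target is the union $\mathcal C^{\beta}_{\rho,\infty}$, exactly as you indicate.
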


In particular, for the limiting case $\kappa=p/q$ (or $\beta=0$), we obtain the following result on BMO-type estimate of $\mathcal I_{\alpha}$.
\begin{cor}\label{mainthm:4}
Let $0<\alpha<Q$, $1<p<Q/{\alpha}$ and $1/q=1/p-{\alpha}/Q$. If $V\in RH_s$ with $s\geq Q/2$ and $\kappa=p/q$, then the $\mathcal L$-fractional integral operator $\mathcal I_{\alpha}$ is bounded from $L^{p,\kappa}_{\rho,\infty}(\mathbb H^n)$ into $\mathrm{BMO}_{\rho,\infty}(\mathbb H^n)$.
\end{cor}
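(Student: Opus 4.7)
The strategy is to recognise the corollary as the endpoint case $\beta=0$ of Theorem \ref{mainthm:3} and to deduce it by direct invocation. The key observation is the identification of spaces: in the definition \eqref{hconti} of $\mathcal{C}^{\beta}_{\rho,\theta}(\mathbb H^n)$, setting $\beta=0$ collapses the normalising factor $|B(u_0,r)|^{1+\beta/Q}$ to $|B(u_0,r)|$, so the defining inequality becomes precisely the $\mathrm{BMO}_{\rho,\theta}(\mathbb H^n)$ condition \eqref{BM}. Hence $\mathcal{C}^{0}_{\rho,\theta}(\mathbb H^n)=\mathrm{BMO}_{\rho,\theta}(\mathbb H^n)$ for each $\theta>0$, and taking the union over $\theta$ gives $\mathcal{C}^{0}_{\rho,\infty}(\mathbb H^n)=\mathrm{BMO}_{\rho,\infty}(\mathbb H^n)$.

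Next I would verify that the parameters of the corollary are admissible in Theorem \ref{mainthm:3}. Under the hypothesis $\kappa=p/q$, the relation $\beta/Q=\kappa/p-1/q$ forces $\beta=0$, which trivially satisfies the smallness requirement $\beta<\delta\le1$ coming from Lemma \ref{kernel2} (recall $\delta>0$). The remaining hypotheses --- the ranges of $p$, $q$, and the reverse H\"older assumption $V\in RH_s$ with $s\geq Q/2$ --- are identical in the two statements. Feeding this choice of parameters into Theorem \ref{mainthm:3} yields the boundedness of $\mathcal{I}_{\alpha}$ from $L^{p,\kappa}_{\rho,\infty}(\mathbb H^n)$ into $\mathcal{C}^{0}_{\rho,\infty}(\mathbb H^n)$, which by the previous paragraph coincides with $\mathrm{BMO}_{\rho,\infty}(\mathbb H^n)$.

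The only point requiring care is to check that the proof of Theorem \ref{mainthm:3} goes through unchanged at the endpoint $\beta=0$ and does not tacitly use strict positivity of $\beta$. I expect no obstacle: $\beta$ enters the argument only through the factor $|B|^{\beta/Q}$ in the denominator of the mean-oscillation bound and through a H\"older-smoothness exponent controlled by $\delta$ in Lemma \ref{kernel2}. Both ingredients degenerate gracefully as $\beta\downarrow 0$, since the standard local-versus-global decomposition --- Theorem \ref{strong} applied with H\"older's inequality on the local piece and the kernel-smoothness estimate of Lemma \ref{kernel2} on the global piece --- already produces a bound of the exact form \eqref{BM} without needing $\beta>0$ anywhere. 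Thus the corollary follows with no additional computation beyond what is done in the proof of Theorem \ref{mainthm:3}.
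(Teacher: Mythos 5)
Your proposal is correct and matches the paper exactly: the corollary is stated there as an immediate specialization of Theorem \ref{mainthm:3} to $\kappa=p/q$ (hence $\beta=0$), using the observation already made in Section 2 that $\mathcal{C}^{0}_{\rho,\theta}(\mathbb H^n)$ coincides with $\mathrm{BMO}_{\rho,\theta}(\mathbb H^n)$. Your extra check that the proof of Theorem \ref{mainthm:3} survives at $\beta=0$ (the series $\sum_k 2^{-k(\delta-\beta)}$ still converges since $\delta>0$) is sound and is implicitly what the paper relies on.
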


\section{Proofs of Theorems $\ref{mainthm:1}$ and $\ref{mainthm:2}$}
In this section, we will prove the conclusions of Theorems \ref{mainthm:1} and \ref{mainthm:2}. Let us remind that the $\mathcal L$-fractional integral operator of order $\alpha\in(0,Q)$ can be written as
\begin{equation*}
\mathcal I_{\alpha}f(u)={\mathcal L}^{-{\alpha}/2}f(u)=\int_{\mathbb H^n}\mathcal K_{\alpha}(u,v)f(v)\,dv,
\end{equation*}
where
\begin{equation}\label{kauv}
\mathcal K_{\alpha}(u,v)=\frac{1}{\Gamma(\alpha/2)}\int_0^{\infty}P_s(u,v)\,s^{\alpha/2-1}ds.
\end{equation}

The following lemma gives the estimate of the kernel $\mathcal K_{\alpha}(u,v)$ related to the Schr\"odinger operator $\mathcal L$, which plays a key role in the proof of our main theorems.
\begin{lem}\label{kernel}
Let $V\in RH_s$ with $s\geq Q/2$ and $0<\alpha<Q$. For every positive integer $N\geq1$, there exists a positive constant $C_{N,\alpha}>0$ such that for all $u$ and $v$ in $\mathbb H^n$,
\begin{equation}\label{WH1}
\big|\mathcal K_{\alpha}(u,v)\big|\leq C_{N,\alpha}\bigg(1+\frac{|v^{-1}u|}{\rho(u)}\bigg)^{-N}\frac{1}{|v^{-1}u|^{Q-\alpha}}.
\end{equation}
\end{lem}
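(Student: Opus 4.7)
The plan is to substitute the improved heat-kernel estimate of Lemma \ref{ker1} into the integral representation \eqref{kauv} and then carry out a case analysis on the resulting $s$-integral. Writing $r:=|v^{-1}u|$ and $\rho:=\rho(u)$, and using $\bigl(1+\sqrt{s}/\rho(u)+\sqrt{s}/\rho(v)\bigr)^{-M}\leq\bigl(1+\sqrt{s}/\rho\bigr)^{-M}$ for some integer $M\geq N$ to be fixed below, the lemma reduces to showing
\[
J:=\int_0^{\infty}s^{(\alpha-Q)/2-1}\exp\!\left(-\frac{r^2}{As}\right)\left(1+\frac{\sqrt{s}}{\rho}\right)^{-M}ds\leq C\left(1+\frac{r}{\rho}\right)^{-N}\frac{1}{r^{Q-\alpha}}.
\]

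When $r\leq\rho$, the right-hand side is comparable to $r^{\alpha-Q}$, so it is enough to drop the factor $(1+\sqrt{s}/\rho)^{-M}$ from $J$ and repeat the Gamma-type change of variables already carried out in the excerpt during the verification of \eqref{claim}; this immediately gives $J\leq C r^{\alpha-Q}$.

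When $r>\rho$, I would make the substitution $\sigma=r^2/(As)$, which normalizes the exponential factor and turns $J$ into
\[
J=C r^{\alpha-Q}\int_0^{\infty}\sigma^{(Q-\alpha)/2-1}e^{-\sigma}\left(1+\frac{w}{\sqrt{\sigma}}\right)^{-M}d\sigma,\qquad w:=\frac{r}{\sqrt{A}\,\rho}.
\]
Since $r>\rho$ forces $w\geq 1/\sqrt{A}$, I would split the $\sigma$-integral at $\sigma=w^2$. On $\{\sigma\leq w^2\}$ one has $w/\sqrt{\sigma}\geq 1$, so $(1+w/\sqrt{\sigma})^{-M}\leq w^{-M}\sigma^{M/2}$, and the remaining integral is dominated by $\Gamma((Q-\alpha+M)/2)$; on $\{\sigma>w^2\}$ the polynomial factor is bounded by $1$ and the tail estimate $\int_{w^2}^{\infty}\sigma^a e^{-\sigma}d\sigma\leq C e^{-w^2/2}\leq C w^{-M}$ applies. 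Taking $M=N$ yields $J\leq C w^{-N}r^{\alpha-Q}\approx(\rho/r)^N r^{\alpha-Q}\approx(1+r/\rho)^{-N}r^{\alpha-Q}$, as required.

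The main obstacle is extracting the precise decay factor $(1+r/\rho)^{-N}$ in the regime $r>\rho$: the bare Gaussian bound \eqref{heat} only yields $r^{\alpha-Q}$, so it is indispensable to use the sharpened factor $(1+\sqrt{s}/\rho)^{-M}$ supplied by Lemma \ref{ker1}. The substitution $\sigma=r^2/(As)$ is the crucial device that converts $\sqrt{s}/\rho$ into $w/\sqrt{\sigma}$ with $w=r/(\sqrt{A}\,\rho)$, after which standard incomplete-Gamma estimates produce the desired polynomial decay in $r/\rho$.
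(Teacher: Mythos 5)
Your proposal is correct and follows essentially the same route as the paper: substitute the decay estimate of Lemma \ref{ker1} into the representation \eqref{kauv}, reduce to a one-dimensional integral in $s$, and extract the factor $\bigl(1+|v^{-1}u|/\rho(u)\bigr)^{-N}$ by a case analysis. The only cosmetic difference is that the paper splits the $s$-integral at $s=|v^{-1}u|^{2}$ and uses the pointwise inequality $\sqrt{s}/|v^{-1}u|\leq(\sqrt{s}+\rho(u))/(|v^{-1}u|+\rho(u))$ to avoid distinguishing $r\leq\rho$ from $r>\rho$, whereas you normalize by the substitution $\sigma=r^{2}/(As)$ and invoke incomplete-Gamma tail bounds; both yield \eqref{WH1}.
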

\begin{proof}
By Lemma \ref{ker1} and \eqref{kauv}, we have
\begin{equation*}
\begin{split}
\big|\mathcal K_{\alpha}(u,v)\big|&\leq\frac{1}{\Gamma(\alpha/2)}\int_0^{\infty}\big|P_s(u,v)\big|\,s^{\alpha/2-1}ds\\
&\leq\frac{1}{\Gamma(\alpha/2)}\int_0^{\infty}\frac{C_N}{s^{Q/2}}\cdot\exp\bigg(-\frac{|v^{-1}u|^2}{As}\bigg)
\bigg(1+\frac{\sqrt{s\,}}{\rho(u)}+\frac{\sqrt{s\,}}{\rho(v)}\bigg)^{-N}s^{\alpha/2-1}ds\\
&\leq\frac{1}{\Gamma(\alpha/2)}\int_0^{\infty}\frac{C_N}{s^{Q/2}}\cdot\exp\bigg(-\frac{|v^{-1}u|^2}{As}\bigg)
\bigg(1+\frac{\sqrt{s\,}}{\rho(u)}\bigg)^{-N}s^{\alpha/2-1}ds.
\end{split}
\end{equation*}
We consider two cases $s>|v^{-1}u|^2$ and $0\leq s\leq|v^{-1}u|^2$, respectively. Thus, $|\mathcal K_{\alpha}(u,v)|\leq I+II$, where
\begin{equation*}
I=\frac{1}{\Gamma(\alpha/2)}\int_{|v^{-1}u|^2}^{\infty}\frac{C_N}{s^{Q/2}}\cdot\exp\bigg(-\frac{|v^{-1}u|^2}{As}\bigg)
\bigg(1+\frac{\sqrt{s\,}}{\rho(u)}\bigg)^{-N}s^{\alpha/2-1}ds
\end{equation*}
and
\begin{equation*}
II=\frac{1}{\Gamma(\alpha/2)}\int_0^{|v^{-1}u|^2}\frac{C_N}{s^{Q/2}}\cdot\exp\bigg(-\frac{|v^{-1}u|^2}{As}\bigg)
\bigg(1+\frac{\sqrt{s\,}}{\rho(u)}\bigg)^{-N}s^{\alpha/2-1}ds.
\end{equation*}
When $s>|v^{-1}u|^2$, then $\sqrt{s\,}>|v^{-1}u|$, and hence
\begin{equation*}
\begin{split}
I&\leq\frac{1}{\Gamma(\alpha/2)}\int_{|v^{-1}u|^2}^{\infty}\frac{C_N}{s^{Q/2}}\cdot\exp\bigg(-\frac{|v^{-1}u|^2}{As}\bigg)
\bigg(1+\frac{|v^{-1}u|}{\rho(u)}\bigg)^{-N}s^{\alpha/2-1}ds\\
&\leq C_{N,\alpha}\bigg(1+\frac{|v^{-1}u|}{\rho(u)}\bigg)^{-N}\int_{|v^{-1}u|^2}^{\infty}s^{\alpha/2-Q/2-1}ds\\
&\leq C_{N,\alpha}\bigg(1+\frac{|v^{-1}u|}{\rho(u)}\bigg)^{-N}\frac{1}{|v^{-1}u|^{Q-\alpha}},
\end{split}
\end{equation*}
where the last integral converges because $0<\alpha<Q$. On the other hand,
\begin{equation*}
\begin{split}
II&\leq C_{N,\alpha}\int_0^{|v^{-1}u|^2}\frac{1}{s^{Q/2}}\cdot\bigg(\frac{|v^{-1}u|^2}{s}\bigg)^{-(Q/2+N/2)}
\bigg(1+\frac{\sqrt{s\,}}{\rho(u)}\bigg)^{-N}s^{\alpha/2-1}ds\\
&=C_{N,\alpha}\int_0^{|v^{-1}u|^2}\frac{1}{|v^{-1}u|^Q}\cdot\bigg(\frac{\sqrt{s\,}}{|v^{-1}u|}\bigg)^{N}
\bigg(1+\frac{\sqrt{s\,}}{\rho(u)}\bigg)^{-N}s^{\alpha/2-1}ds.
\end{split}
\end{equation*}
It is easy to see that when $0\leq s\leq|v^{-1}u|^2$,
\begin{equation*}
\frac{\sqrt{s\,}}{|v^{-1}u|}\leq\frac{\sqrt{s\,}+\rho(u)}{|v^{-1}u|+\rho(u)}.
\end{equation*}
Hence,
\begin{equation*}
\begin{split}
II&\leq C_{N,\alpha}\int_0^{|v^{-1}u|^2}\frac{1}{|v^{-1}u|^Q}\cdot\bigg(\frac{\sqrt{s\,}+\rho(u)}{|v^{-1}u|+\rho(u)}\bigg)^{N}
\bigg(\frac{\sqrt{s\,}+\rho(u)}{\rho(u)}\bigg)^{-N}s^{\alpha/2-1}ds\\
&=\frac{C_{N,\alpha}}{|v^{-1}u|^Q}\bigg(1+\frac{|v^{-1}u|}{\rho(u)}\bigg)^{-N}\int_0^{|v^{-1}u|^2}s^{\alpha/2-1}ds\\
&=C_{N,\alpha}\bigg(1+\frac{|v^{-1}u|}{\rho(u)}\bigg)^{-N}\frac{1}{|v^{-1}u|^{Q-\alpha}}.
\end{split}
\end{equation*}
Combining the estimates of $I$ and $II$ yields the desired estimate \eqref{WH1} for $\alpha\in(0,Q)$. This concludes the proof of the lemma.
\end{proof}
We are now ready to show our main theorems.
\begin{proof}[Proof of Theorem $\ref{mainthm:1}$]
By definition, we only need to show that for any given ball $B=B(u_0,r)$ of $\mathbb H^n$, there is some $\vartheta>0$ such that
\begin{equation}\label{Main1}
\bigg(\frac{1}{|B|^{\kappa q/p}}\int_B\big|\mathcal I_{\alpha}f(u)\big|^q\,du\bigg)^{1/q}\leq C\cdot\left(1+\frac{r}{\rho(u_0)}\right)^{\vartheta}
\end{equation}
holds for given $f\in L^{p,\kappa}_{\rho,\infty}(\mathbb H^n)$ with $(p,\kappa)\in(1,Q/{\alpha})\times(0,p/q)$. Suppose that $f\in L^{p,\kappa}_{\rho,\theta}(\mathbb H^n)$ for some $\theta>0$. We decompose the function $f$ as
\begin{equation*}
\begin{cases}
f=f_1+f_2\in L^{p,\kappa}_{\rho,\theta}(\mathbb H^n);\  &\\
f_1=f\cdot\chi_{2B};\  &\\
f_2=f\cdot\chi_{(2B)^c},
\end{cases}
\end{equation*}
where $2B$ is the ball centered at $u_0$ of radius $2r>0$, $\chi_{2B}$ is the characteristic function of $2B$ and $(2B)^c=\mathbb H^n\backslash(2B)$. Then by the linearity of $\mathcal I_{\alpha}$, we write
\begin{equation*}
\begin{split}
\bigg(\frac{1}{|B|^{\kappa q/p}}\int_B\big|\mathcal I_{\alpha}f(u)\big|^q\,du\bigg)^{1/q}
&\leq\bigg(\frac{1}{|B|^{\kappa q/p}}\int_B\big|\mathcal I_{\alpha}f_1(u)\big|^q\,du\bigg)^{1/q}\\
&+\bigg(\frac{1}{|B|^{\kappa q/p}}\int_B\big|\mathcal I_{\alpha}f_2(u)\big|^q\,du\bigg)^{1/q}\\
&:=I_1+I_2.
\end{split}
\end{equation*}
In what follows, we consider each part separately. By Theorem \ref{strong} (1), we have
\begin{equation*}
\begin{split}
I_1&=\bigg(\frac{1}{|B|^{\kappa q/p}}\int_B\big|\mathcal I_{\alpha}f_1(u)\big|^q\,du\bigg)^{1/q}\\
&\leq C\cdot\frac{1}{|B|^{\kappa/p}}\bigg(\int_{\mathbb H^n}\big|f_1(u)\big|^p\,du\bigg)^{1/p}\\
&=C\cdot\frac{1}{|B|^{\kappa/p}}\bigg(\int_{2B}\big|f(u)\big|^p\,du\bigg)^{1/p}\\
&\leq C\big\|f\big\|_{L^{p,\kappa}_{\rho,\theta}(\mathbb H^n)}\cdot
\frac{|2B|^{\kappa/p}}{|B|^{\kappa/p}}\cdot\left(1+\frac{2r}{\rho(u_0)}\right)^{\theta}.
\end{split}
\end{equation*}
Also observe that for any fixed $\theta>0$,
\begin{equation}\label{2rx}
1\leq\left(1+\frac{2r}{\rho(u_0)}\right)^{\theta}\leq 2^{\theta}\left(1+\frac{r}{\rho(u_0)}\right)^{\theta}.
\end{equation}
This in turn implies that
\begin{equation*}
\begin{split}
I_1&\leq C_{\theta,n}\big\|f\big\|_{L^{p,\kappa}_{\rho,\theta}(\mathbb H^n)}\left(1+\frac{r}{\rho(u_0)}\right)^{\theta}.
\end{split}
\end{equation*}
Next we estimate the other term $I_2$. Notice that for any $u\in B(u_0,r)$ and $v\in (2B)^c$, one has
\begin{equation*}
\big|v^{-1}u\big|=\big|(v^{-1}u_0)\cdot(u_0^{-1}u)\big|\leq\big|v^{-1}u_0\big|+\big|u_0^{-1}u\big|
\end{equation*}
and
\begin{equation*}
\big|v^{-1}u\big|=\big|(v^{-1}u_0)\cdot(u_0^{-1}u)\big|\geq\big|v^{-1}u_0\big|-\big|u_0^{-1}u\big|.
\end{equation*}
Thus,
\begin{equation*}
\frac{1}{\,2\,}\big|v^{-1}u_0\big|\leq\big|v^{-1}u\big|\leq\frac{3}{\,2\,}\big|v^{-1}u_0\big|,
\end{equation*}
i.e., $|v^{-1}u|\approx|v^{-1}u_0|$. It then follows from Lemma \ref{kernel} that for any $u\in B(u_0,r)$ and any positive integer $N$,
\begin{equation}\label{Talpha}
\begin{split}
\big|\mathcal I_{\alpha}f_2(u)\big|&\leq\int_{(2B)^c}|\mathcal K_{\alpha}(u,v)|\cdot|f(v)|\,dv\\
&\leq C_{N,\alpha}\int_{(2B)^c}\bigg(1+\frac{|v^{-1}u|}{\rho(u)}\bigg)^{-N}\frac{1}{|v^{-1}u|^{Q-\alpha}}\cdot|f(v)|\,dv\\
&\leq C_{N,\alpha,n}\int_{(2B)^c}\bigg(1+\frac{|v^{-1}u_0|}{\rho(u)}\bigg)^{-N}\frac{1}{|v^{-1}u_0|^{Q-\alpha}}\cdot|f(v)|\,dv\\
&=C_{N,\alpha,n}\sum_{k=1}^\infty\int_{2^kr\leq|v^{-1}u_0|<2^{k+1}r}\bigg(1+\frac{|v^{-1}u_0|}{\rho(u)}\bigg)^{-N}
\frac{1}{|v^{-1}u_0|^{Q-\alpha}}\cdot|f(v)|\,dv\\
&\leq C_{N,\alpha,n}\sum_{k=1}^\infty\frac{1}{|B(u_0,2^{k+1}r)|^{1-(\alpha/Q)}}
\int_{|v^{-1}u_0|<2^{k+1}r}\bigg(1+\frac{2^kr}{\rho(u)}\bigg)^{-N}|f(v)|\,dv.
\end{split}
\end{equation}
In view of \eqref{com2} and \eqref{2rx}, we can further obtain
\begin{align}\label{Tf2}
\big|\mathcal I_{\alpha}f_2(u)\big|
&\leq C\sum_{k=1}^\infty\frac{1}{|B(u_0,2^{k+1}r)|^{1-(\alpha/Q)}}\notag\\
&\times\int_{|v^{-1}u_0|<2^{k+1}r}\left(1+\frac{r}{\rho(u_0)}\right)^{N\cdot\frac{N_0}{N_0+1}}
\left(1+\frac{2^kr}{\rho(u_0)}\right)^{-N}|f(v)|\,dv\notag\\
&\leq C\sum_{k=1}^\infty\frac{1}{|B(u_0,2^{k+1}r)|^{1-(\alpha/Q)}}\notag\\
&\times\int_{B(u_0,2^{k+1}r)}\left(1+\frac{r}{\rho(u_0)}\right)^{N\cdot\frac{N_0}{N_0+1}}
\left(1+\frac{2^{k+1}r}{\rho(u_0)}\right)^{-N}|f(v)|\,dv.
\end{align}
We consider each term in the sum of \eqref{Tf2} separately. By using H\"older's inequality, we obtain that for each integer $k\geq1$,
\begin{equation*}
\begin{split}
&\frac{1}{|B(u_0,2^{k+1}r)|^{1-(\alpha/Q)}}\int_{B(u_0,2^{k+1}r)}\big|f(v)\big|\,dv\\
&\leq\frac{1}{|B(u_0,2^{k+1}r)|^{1-(\alpha/Q)}}\bigg(\int_{B(u_0,2^{k+1}r)}\big|f(v)\big|^p\,dv\bigg)^{1/p}
\bigg(\int_{B(u_0,2^{k+1}r)}1\,dv\bigg)^{1/{p'}}\\
&\leq C\big\|f\big\|_{L^{p,\kappa}_{\rho,\theta}(\mathbb H^n)}\cdot\frac{|B(u_0,2^{k+1}r)|^{{\kappa}/p}}{|B(u_0,2^{k+1}r)|^{1/q}}
\left(1+\frac{2^{k+1}r}{\rho(u_0)}\right)^{\theta}.
\end{split}
\end{equation*}
This allows us to obtain
\begin{equation*}
\begin{split}
I_2&\leq C\big\|f\big\|_{L^{p,\kappa}_{\rho,\theta}(\mathbb H^n)}\cdot\frac{|B(u_0,r)|^{1/q}}{|B(u_0,r)|^{{\kappa}/p}}
\sum_{k=1}^\infty\frac{|B(u_0,2^{k+1}r)|^{{\kappa}/p}}{|B(u_0,2^{k+1}r)|^{1/q}}
\left(1+\frac{r}{\rho(u_0)}\right)^{N\cdot\frac{N_0}{N_0+1}}\left(1+\frac{2^{k+1}r}{\rho(u_0)}\right)^{-N+\theta}\\
&=C\big\|f\big\|_{L^{p,\kappa}_{\rho,\theta}(\mathbb H^n)}\left(1+\frac{r}{\rho(u_0)}\right)^{N\cdot\frac{N_0}{N_0+1}}
\sum_{k=1}^\infty\frac{|B(u_0,r)|^{1/q-\kappa/p}}{|B(u_0,2^{k+1}r)|^{1/q-\kappa/p}}
\left(1+\frac{2^{k+1}r}{\rho(u_0)}\right)^{-N+\theta}.
\end{split}
\end{equation*}
Thus, by choosing $N$ large enough so that $N>\theta$, and the last series is convergent, then we have
\begin{equation*}
\begin{split}
I_2&\leq C\big\|f\big\|_{L^{p,\kappa}_{\rho,\theta}(\mathbb H^n)}
\left(1+\frac{r}{\rho(u_0)}\right)^{N\cdot\frac{N_0}{N_0+1}}\sum_{k=1}^\infty\left(\frac{|B(u_0,r)|}{|B(u_0,2^{k+1}r)|}\right)^{{(1/q-\kappa/p)}}\\
&\leq C\big\|f\big\|_{L^{p,\kappa}_{\rho,\theta}(\mathbb H^n)}\left(1+\frac{r}{\rho(u_0)}\right)^{N\cdot\frac{N_0}{N_0+1}},
\end{split}
\end{equation*}
where the last inequality follows from the fact that $1/q-\kappa/p>0$. Summing up the above estimates for $I_1$ and $I_2$ and letting $\vartheta=\max\big\{\theta,N\cdot\frac{N_0}{N_0+1}\big\}$, we obtain the desired inequality \eqref{Main1}. This completes the proof of Theorem \ref{mainthm:1}.
\end{proof}

\begin{proof}[Proof of Theorem $\ref{mainthm:2}$]
To prove Theorem \ref{mainthm:2}, by definition, it suffices to prove that for each given ball $B=B(u_0,r)$ of $\mathbb H^n$, there is some $\vartheta>0$ such that
\begin{equation}\label{Main2}
\frac{1}{|B|^{\kappa}}\sup_{\lambda>0}\lambda\cdot\big|\big\{u\in B:|\mathcal I_{\alpha}f(u)|>\lambda\big\}\big|^{1/q}
\leq C\cdot\left(1+\frac{r}{\rho(u_0)}\right)^{\vartheta}
\end{equation}
holds for given $f\in L^{1,\kappa}_{\rho,\infty}(\mathbb H^n)$ with $0<\kappa<1/q$ and $q=Q/{(Q-\alpha)}$. Now suppose that $f\in L^{1,\kappa}_{\rho,\theta}(\mathbb H^n)$ for some $\theta>0$. We decompose the function $f$ as
\begin{equation*}
\begin{cases}
f=f_1+f_2\in L^{1,\kappa}_{\rho,\theta}(\mathbb H^n);\  &\\
f_1=f\cdot\chi_{2B};\  &\\
f_2=f\cdot\chi_{(2B)^c}.
\end{cases}
\end{equation*}
Then for any given $\lambda>0$, by the linearity of $\mathcal I_{\alpha}$, we can write
\begin{equation*}
\begin{split}
&\frac{1}{|B|^{\kappa}}\lambda\cdot\big|\big\{u\in B:|\mathcal I_{\alpha}f(u)|>\lambda\big\}\big|^{1/q}\\
&\leq\frac{1}{|B|^{\kappa}}\lambda\cdot\big|\big\{u\in B:|\mathcal I_{\alpha}f_1(u)|>\lambda/2\big\}\big|^{1/q}\\
&+\frac{1}{|B|^{\kappa}}\lambda\cdot\big|\big\{u\in B:|\mathcal I_{\alpha}f_2(u)|>\lambda/2\big\}\big|^{1/q}\\
&:=J_1+J_2.
\end{split}
\end{equation*}
We first give the estimate for the term $J_1$. By Theorem \ref{strong} (2), we get
\begin{equation*}
\begin{split}
J_1&=\frac{1}{|B|^{\kappa}}\lambda\cdot\big|\big\{u\in B:|\mathcal I_{\alpha} f_1(u)|>\lambda/2\big\}\big|^{1/q}\\
&\leq C\cdot\frac{1}{|B|^{\kappa}}\bigg(\int_{\mathbb H^n}\big|f_1(u)\big|\,du\bigg)\\
&=C\cdot\frac{1}{|B|^{\kappa}}\bigg(\int_{2B}\big|f(u)\big|\,du\bigg)\\
&\leq C\big\|f\big\|_{L^{1,\kappa}_{\rho,\theta}(\mathbb H^n)}\cdot\frac{|2B|^{\kappa}}{|B|^{\kappa}}\left(1+\frac{2r}{\rho(u_0)}\right)^{\theta}.
\end{split}
\end{equation*}
Therefore, in view of \eqref{2rx},
\begin{equation*}
J_1\leq C\big\|f\big\|_{L^{1,\kappa}_{\rho,\theta}(\mathbb H^n)}\cdot\left(1+\frac{r}{\rho(u_0)}\right)^{\theta}.
\end{equation*}
As for the second term $J_2$, by using the pointwise inequality \eqref{Tf2} and Chebyshev's inequality, we can deduce that
\begin{equation}\label{Tf2pr}
\begin{split}
J_2&=\frac{1}{|B|^{\kappa}}\lambda\cdot\big|\big\{u\in B:|\mathcal I_{\alpha}f_2(u)|>\lambda/2\big\}\big|^{1/q}\\
&\leq\frac{2}{|B|^{\kappa}}\bigg(\int_{B}\big|\mathcal I_{\alpha}f_2(u)\big|^q\,du\bigg)^{1/q}\\
&\leq C\cdot\frac{|B|^{1/q}}{|B|^{\kappa}}
\sum_{k=1}^\infty\frac{1}{|B(u_0,2^{k+1}r)|^{1-(\alpha/Q)}}\\
&\times\int_{B(u_0,2^{k+1}r)}\left(1+\frac{r}{\rho(u_0)}\right)^{N\cdot\frac{N_0}{N_0+1}}
\left(1+\frac{2^{k+1}r}{\rho(u_0)}\right)^{-N}|f(v)|\,dv.
\end{split}
\end{equation}
We consider each term in the sum of \eqref{Tf2pr} separately. For each integer $k\geq1$, we compute
\begin{equation*}
\begin{split}
&\frac{1}{|B(u_0,2^{k+1}r)|^{1-(\alpha/Q)}}\int_{B(u_0,2^{k+1}r)}\big|f(v)\big|\,dv\\
&\leq C\big\|f\big\|_{L^{1,\kappa}_{\rho,\theta}(\mathbb H^n)}\cdot
\frac{|B(u_0,2^{k+1}r)|^{\kappa}}{|B(u_0,2^{k+1}r)|^{1/q}}\left(1+\frac{2^{k+1}r}{\rho(u_0)}\right)^{\theta}.
\end{split}
\end{equation*}
Consequently,
\begin{equation*}
\begin{split}
J_2&\leq C\big\|f\big\|_{L^{1,\kappa}_{\rho,\theta}(\mathbb H^n)}
\cdot\frac{|B(u_0,r)|^{1/q}}{|B(u_0,r)|^{\kappa}}\sum_{k=1}^\infty\frac{|B(u_0,2^{k+1}r)|^{\kappa}}{|B(u_0,2^{k+1}r)|^{1/q}}
\left(1+\frac{r}{\rho(u_0)}\right)^{N\cdot\frac{N_0}{N_0+1}}\left(1+\frac{2^{k+1}r}{\rho(u_0)}\right)^{-N+\theta}\\
&=C\big\|f\big\|_{L^{1,\kappa}_{\rho,\theta}(\mathbb H^n)}
\left(1+\frac{r}{\rho(u_0)}\right)^{N\cdot\frac{N_0}{N_0+1}}\sum_{k=1}^\infty\frac{|B(u_0,r)|^{{1/q-\kappa}}}{|B(u_0,2^{k+1}r)|^{{1/q-\kappa}}}
\left(1+\frac{2^{k+1}r}{\rho(u_0)}\right)^{-N+\theta}.
\end{split}
\end{equation*}
Therefore, by selecting $N$ large enough so that $N>\theta$, we thus have
\begin{equation*}
\begin{split}
J_2&\leq C\big\|f\big\|_{L^{1,\kappa}_{\rho,\theta}(\mathbb H^n)}\left(1+\frac{r}{\rho(u_0)}\right)^{N\cdot\frac{N_0}{N_0+1}}
\sum_{k=1}^\infty\left(\frac{|B(u_0,r)|}{|B(u_0,2^{k+1}r)|}\right)^{{(1/q-\kappa)}}\\
&\leq C\big\|f\big\|_{L^{1,\kappa}_{\rho,\theta}(\mathbb H^n)}
\left(1+\frac{r}{\rho(u_0)}\right)^{N\cdot\frac{N_0}{N_0+1}},
\end{split}
\end{equation*}
where the last step is due to the fact that $0<\kappa<1/q$. Let $\vartheta=\max\big\{\theta,N\cdot\frac{N_0}{N_0+1}\big\}$. Here $N$ is an appropriate constant. Summing up the above estimates for $J_1$ and $J_2$, and then taking the supremum over all $\lambda>0$, we obtain the desired inequality \eqref{Main2}. This finishes the proof of Theorem \ref{mainthm:2}.
\end{proof}

\section{Proof of Theorem \ref{mainthm:3}}
We need the following lemma which establishes the Lipschitz regularity of the kernel $P_s(u,v)$. See Lemma 11 and Remark 4 in \cite{lin}.
\begin{lem}[\cite{lin}]\label{ker2}
Let $V\in RH_s$ with $s\geq Q/2$. For every positive integer $N\geq1$, there exists a positive constant $C_N>0$ such that for all $u$ and $v$ in $\mathbb H^n$, and for some fixed $0<\delta\leq 1$,
\begin{equation*}
\big|P_s(u\cdot h,v)-P_s(u,v)\big|\leq C_N\bigg(\frac{|h|}{\sqrt{s\,}}\bigg)^{\delta} s^{-Q/2}\exp\bigg(-\frac{|v^{-1}u|^2}{As}\bigg)\bigg(1+\frac{\sqrt{s\,}}{\rho(u)}+\frac{\sqrt{s\,}}{\rho(v)}\bigg)^{-N},
\end{equation*}
whenever $|h|\leq|v^{-1}u|/2$.
\end{lem}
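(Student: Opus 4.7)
The plan is to fix a ball $B=B(u_0,r)$ in $\mathbb H^n$ and prove that, for some $\vartheta>0$,
\[
\frac{1}{|B|^{1+\beta/Q}}\int_B \bigl|\mathcal I_\alpha f(u)-(\mathcal I_\alpha f)_B\bigr|\,du\leq C\,\|f\|_{L^{p,\kappa}_{\rho,\theta}(\mathbb H^n)}\Bigl(1+\frac{r}{\rho(u_0)}\Bigr)^{\vartheta},
\]
whenever $f\in L^{p,\kappa}_{\rho,\theta}(\mathbb H^n)$. Mirroring the decomposition used in Theorems \ref{mainthm:1} and \ref{mainthm:2}, I would split $f=f_1+f_2$ with $f_1=f\cdot\chi_{2B}$ and $f_2=f\cdot\chi_{(2B)^c}$ and estimate the mean oscillations of $\mathcal I_\alpha f_1$ and $\mathcal I_\alpha f_2$ separately.

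For the local piece, I would bound $\int_B|\mathcal I_\alpha f_1(u)-(\mathcal I_\alpha f_1)_B|\,du$ by $2\int_B|\mathcal I_\alpha f_1(u)|\,du$, then apply H\"older together with Theorem \ref{strong}(1) to obtain $\int_B|\mathcal I_\alpha f_1(u)|\,du\leq C|B|^{1/q'}\|f\|_{L^p(2B)}$. The Morrey condition gives $\|f\|_{L^p(2B)}\leq C\|f\|_{L^{p,\kappa}_{\rho,\theta}}|2B|^{\kappa/p}(1+r/\rho(u_0))^\theta$; after dividing by $|B|^{1+\beta/Q}$, the resulting exponent of $|B|$ is $1/q'+\kappa/p-1-\beta/Q=-1/q+\kappa/p-\beta/Q=0$, by the prescription $\beta/Q=\kappa/p-1/q$. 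Hence this piece contributes at most $C\|f\|_{L^{p,\kappa}_{\rho,\theta}}(1+r/\rho(u_0))^\theta$.

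For the tail $\mathcal I_\alpha f_2$, the key ingredient is Lemma \ref{kernel2}, a H\"older regularity estimate for the kernel $\mathcal K_\alpha$ with exponent $\delta$, obtained from Lemma \ref{ker2} via the time-integration argument already used to prove Lemma \ref{kernel}. Writing $\mathcal I_\alpha f_2(u)-(\mathcal I_\alpha f_2)_B=\frac{1}{|B|}\int_B[\mathcal I_\alpha f_2(u)-\mathcal I_\alpha f_2(u')]\,du'$, I would observe that for $u,u'\in B$ and $v\in(2B)^c$ one has $|u^{-1}u'|\leq |v^{-1}u|/2$ and $|v^{-1}u|\approx|v^{-1}u_0|$, so that Lemma \ref{kernel2} supplies a smoothness factor $(r/|v^{-1}u|)^\delta$. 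Splitting $(2B)^c$ into dyadic annuli $\{2^kr\leq|v^{-1}u_0|<2^{k+1}r\}$, applying \eqref{com2} to replace $\rho(u)$ by $\rho(u_0)$ at the cost of a factor $(1+r/\rho(u_0))^{N\cdot N_0/(N_0+1)}$, and then invoking H\"older with exponent $p$ together with the Morrey norm, each annular contribution to $|\mathcal I_\alpha f_2(u)-(\mathcal I_\alpha f_2)_B|$ is controlled by a constant multiple of
\[
\|f\|_{L^{p,\kappa}_{\rho,\theta}}\,2^{-k\delta}\,|B(u_0,2^{k+1}r)|^{\beta/Q}\Bigl(1+\frac{2^{k+1}r}{\rho(u_0)}\Bigr)^{-N+\theta}\Bigl(1+\frac{r}{\rho(u_0)}\Bigr)^{N\cdot N_0/(N_0+1)},
\]
using again the identity $\alpha/Q+1/p'+\kappa/p-1=\beta/Q$. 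Taking $N>\theta$ renders the $(1+2^{k+1}r/\rho(u_0))$ factor harmless, and since $|B(u_0,2^{k+1}r)|^{\beta/Q}\approx(2^kr)^\beta$, the residual series is $r^\beta\sum_k 2^{k(\beta-\delta)}$, which converges precisely because $\beta<\delta$. Integrating the pointwise estimate over $u\in B$ and dividing by $|B|^{1+\beta/Q}\approx r^{Q+\beta}$ cancels the geometric factor $r^\beta|B|$ produced by this argument.

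The main difficulty is the delicate balancing of exponents together with the role of the smoothness threshold: the prescribed value $\beta/Q=\kappa/p-1/q$ is what makes the powers of $|B|$ cancel on both pieces, while the strict inequality $\beta<\delta$ is exactly what is needed for the H\"older series to converge. Setting $\vartheta:=\max\bigl\{\theta,\,N\cdot N_0/(N_0+1)\bigr\}$, with $N$ chosen after $\theta$ is fixed, produces the bound required by the $\mathcal C^\beta_{\rho,\vartheta}(\mathbb H^n)$-norm, and the theorem then follows from the definition $\mathcal C^\beta_{\rho,\infty}(\mathbb H^n)=\bigcup_{\vartheta>0}\mathcal C^\beta_{\rho,\vartheta}(\mathbb H^n)$.
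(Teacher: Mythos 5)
Your proposal does not address the statement at hand. The statement is Lemma \ref{ker2}: a pointwise H\"older-regularity estimate for the Schr\"odinger heat kernel $P_s(u,v)$ itself, asserting that $|P_s(u\cdot h,v)-P_s(u,v)|$ gains a factor $(|h|/\sqrt{s\,})^{\delta}$ over the Gaussian-with-decay bound of Lemma \ref{ker1}, uniformly in $s>0$ and for $|h|\leq|v^{-1}u|/2$. What you have written is instead a proof of Theorem \ref{mainthm:3} (the $L^{p,\kappa}_{\rho,\infty}\to\mathcal C^{\beta}_{\rho,\infty}$ boundedness of $\mathcal I_{\alpha}$). That theorem sits two levels \emph{downstream} of the lemma: one first needs Lemma \ref{ker2} to derive the kernel smoothness estimate of Lemma \ref{kernel2} by integrating in $s$, and only then can one run the dyadic-annulus argument you describe. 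Your argument therefore presupposes exactly the estimate it is supposed to establish, which is a circular (or rather, entirely displaced) attempt.

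To actually prove the statement one would need tools of a different nature: perturbation of the free heat kernel $H_s$ by the potential $V$ via the Duhamel/perturbation formula $P_s=H_s-\int_0^s e^{-(s-\tau)\Delta_{\mathbb H^n}}V e^{-\tau\mathcal L}\,d\tau$, the known H\"older continuity of $H_s$ on the Heisenberg group, the reverse H\"older condition on $V$ to control $\int_{B}V$ against powers of $r/\rho$, and the self-improving estimates in $N$ coming from Lemma \ref{N0}. The paper itself does not reprove this lemma; it quotes it from Lin and Liu (Lemma 11 and Remark 4 of \cite{lin}). So the appropriate response to this statement is either a citation or a genuine reproduction of that perturbation argument; a proof of the downstream Morrey-space theorem is neither.
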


Based on the above lemma, we are able to prove the following result, which plays a key role in the proof of our main theorem.

\begin{lem}\label{kernel2}
Let $V\in RH_s$ with $s\geq Q/2$ and $0<\alpha<Q$. For every positive integer $N\geq1$, there exists a positive constant $C_{N,\alpha}>0$ such that for all $u,v$ and $w$ in $\mathbb H^n$, and for some fixed $0<\delta\leq 1$,
\begin{equation}\label{WH2}
\big|\mathcal K_{\alpha}(u,w)-\mathcal K_{\alpha}(v,w)\big|\leq C_{N,\alpha}\bigg(1+\frac{|w^{-1}u|}{\rho(u)}\bigg)^{-N}\frac{|v^{-1}u|^{\delta}}{|w^{-1}u|^{Q-\alpha+\delta}},
\end{equation}
whenever $|v^{-1}u|\leq |w^{-1}u|/2$.
\end{lem}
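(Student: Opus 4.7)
The plan is to mimic the proof of Lemma \ref{kernel} almost verbatim, replacing the size estimate for $P_s(u,v)$ by the Lipschitz estimate provided by Lemma \ref{ker2}. Starting from the integral representation
\begin{equation*}
\mathcal K_{\alpha}(u,w) - \mathcal K_{\alpha}(v,w) = \frac{1}{\Gamma(\alpha/2)}\int_0^{\infty}\bigl[P_s(u,w) - P_s(v,w)\bigr]\,s^{\alpha/2-1}\,ds,
\end{equation*}
I would set $h := u^{-1}v$, so that $v = u\cdot h$ and $|h| = |v^{-1}u|$. The hypothesis $|v^{-1}u| \leq |w^{-1}u|/2$ is then exactly the hypothesis required by Lemma \ref{ker2} applied with $u$, $h$ and second argument $w$. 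Dropping the $\rho(w)$ term in the denominator for simplicity, this yields
\begin{equation*}
\bigl|P_s(v,w) - P_s(u,w)\bigr| \leq C_N \left(\frac{|v^{-1}u|}{\sqrt{s}}\right)^{\!\delta} s^{-Q/2} \exp\!\left(-\frac{|w^{-1}u|^2}{As}\right)\!\left(1+\frac{\sqrt{s}}{\rho(u)}\right)^{\!-N}.
\end{equation*}

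Next I would split the $s$-integral at $s = |w^{-1}u|^2$, obtaining two pieces $I$ (over $s > |w^{-1}u|^2$) and $II$ (over $0 \leq s \leq |w^{-1}u|^2$). On the first range, $\sqrt{s} > |w^{-1}u|$ gives $(1+\sqrt{s}/\rho(u))^{-N} \leq (1+|w^{-1}u|/\rho(u))^{-N}$, after which the exponential may be dropped and the remaining integrand is a pure power $s^{\alpha/2 - Q/2 - \delta/2 - 1}$. The $s$-integral converges since $\alpha < Q \leq Q + \delta$, and equals a constant multiple of $|w^{-1}u|^{\alpha - Q - \delta}$, producing the desired factor $|v^{-1}u|^{\delta}/|w^{-1}u|^{Q-\alpha+\delta}$ times $(1+|w^{-1}u|/\rho(u))^{-N}$.

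On the second range I would absorb the Gaussian decay via the elementary bound $e^{-x} \leq C_N x^{-(Q+\delta+N)/2}$ applied with $x = |w^{-1}u|^2/(As)$, converting $s^{-Q/2}\exp(-|w^{-1}u|^2/(As)) \cdot (|v^{-1}u|/\sqrt{s})^{\delta}$ into $|v^{-1}u|^{\delta}\cdot (\sqrt{s}/|w^{-1}u|)^{N}/|w^{-1}u|^{Q+\delta}$. Recovering the critical-radius factor is done exactly as in the proof of Lemma \ref{kernel}: the inequality $\sqrt{s}/|w^{-1}u| \leq (\sqrt{s}+\rho(u))/(|w^{-1}u|+\rho(u))$ (valid for $s \leq |w^{-1}u|^2$) combined with $(1+\sqrt{s}/\rho(u))^{-N} = (\rho(u)/(\sqrt{s}+\rho(u)))^{N}$ collapses to the clean factor $(1+|w^{-1}u|/\rho(u))^{-N}$. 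The remaining integral $\int_0^{|w^{-1}u|^2} s^{\alpha/2-1}\,ds = (2/\alpha)|w^{-1}u|^{\alpha}$ is elementary and gives the same bound as on the first range.

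Combining $I$ and $II$ yields the claimed estimate \eqref{WH2}. The only real obstacle is bookkeeping the exponents: one must pick the power $(Q+\delta+N)/2$ in the Gaussian bound so that the four factors $s^{-Q/2}$, $s^{-\delta/2}$, $\exp(-|w^{-1}u|^2/(As))$ and $(1+\sqrt{s}/\rho(u))^{-N}$ recombine into the two factors $|v^{-1}u|^{\delta}/|w^{-1}u|^{Q-\alpha+\delta}$ and $(1+|w^{-1}u|/\rho(u))^{-N}$ with the correct powers. No conceptually new ingredient beyond those already used in Lemma \ref{kernel} is required.
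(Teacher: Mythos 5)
Your proposal is correct and follows essentially the same route as the paper: apply Lemma \ref{ker2} with $h=u^{-1}v$, drop the $\rho(w)$ term, split the $s$-integral at $|w^{-1}u|^2$, and on the lower range absorb the Gaussian with the power $(Q+\delta+N)/2$ and collapse $(\sqrt{s}/|w^{-1}u|)^N(1+\sqrt{s}/\rho(u))^{-N}$ into $(1+|w^{-1}u|/\rho(u))^{-N}$ exactly as in Lemma \ref{kernel}. The only cosmetic difference is that on the upper range you keep the $s^{-\delta/2}$ inside the integral (using convergence of $s^{\alpha/2-Q/2-\delta/2-1}$) while the paper first bounds $(|u^{-1}v|/\sqrt{s})^{\delta}\leq(|u^{-1}v|/|w^{-1}u|)^{\delta}$; both yield the same bound.
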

\begin{proof}
In view of Lemma \ref{ker2} and \eqref{kauv}, we have
\begin{equation*}
\begin{split}
&\big|\mathcal K_{\alpha}(u,w)-\mathcal K_{\alpha}(v,w)\big|\\
&=\frac{1}{\Gamma(\alpha/2)}\bigg|\int_0^{\infty}P_s(u,w)\,s^{\alpha/2-1}ds-\int_0^{\infty}P_s(v,w)\,s^{\alpha/2-1}ds\bigg|\\
&\leq\frac{1}{\Gamma(\alpha/2)}\int_0^{\infty}\big|P_s(u\cdot(u^{-1}v),w)-P_s(u,w)\big|\,s^{\alpha/2-1}ds\\
&\leq\frac{1}{\Gamma(\alpha/2)}\int_0^{\infty}C_N\cdot\bigg(\frac{|u^{-1}v|}{\sqrt{s\,}}\bigg)^{\delta} s^{-Q/2}\exp\bigg(-\frac{|w^{-1}u|^2}{As}\bigg)\bigg(1+\frac{\sqrt{s\,}}{\rho(u)}+\frac{\sqrt{s\,}}{\rho(w)}\bigg)^{-N}s^{\alpha/2-1}ds\\
&\leq\frac{1}{\Gamma(\alpha/2)}\int_0^{\infty}C_N\cdot\bigg(\frac{|u^{-1}v|}{\sqrt{s\,}}\bigg)^{\delta} s^{-Q/2}\exp\bigg(-\frac{|w^{-1}u|^2}{As}\bigg)\bigg(1+\frac{\sqrt{s\,}}{\rho(u)}\bigg)^{-N}s^{\alpha/2-1}ds.
\end{split}
\end{equation*}
Arguing as in the proof of Lemma \ref{kernel}, consider two cases as below: $s>|w^{-1}u|^2$ and $0\leq s\leq|w^{-1}u|^2$. Then the right-hand side of the above expression can be written as $III+IV$, where
\begin{equation*}
III=\frac{1}{\Gamma(\alpha/2)}\int_{|w^{-1}u|^2}^{\infty}\frac{C_N}{s^{Q/2}}\cdot
\bigg(\frac{|u^{-1}v|}{\sqrt{s\,}}\bigg)^{\delta}\exp\bigg(-\frac{|w^{-1}u|^2}{As}\bigg)
\bigg(1+\frac{\sqrt{s\,}}{\rho(u)}\bigg)^{-N}s^{\alpha/2-1}ds,
\end{equation*}
and
\begin{equation*}
IV=\frac{1}{\Gamma(\alpha/2)}\int_0^{|w^{-1}u|^2}\frac{C_N}{s^{Q/2}}\cdot
\bigg(\frac{|u^{-1}v|}{\sqrt{s\,}}\bigg)^{\delta}\exp\bigg(-\frac{|w^{-1}u|^2}{As}\bigg)
\bigg(1+\frac{\sqrt{s\,}}{\rho(u)}\bigg)^{-N}s^{\alpha/2-1}ds.
\end{equation*}
When $s>|w^{-1}u|^2$, then $\sqrt{s\,}>|w^{-1}u|$, and hence
\begin{equation*}
\begin{split}
III&\leq\frac{1}{\Gamma(\alpha/2)}\int_{|w^{-1}u|^2}^{\infty}\frac{C_N}{s^{Q/2}}\cdot\bigg(\frac{|u^{-1}v|}{|w^{-1}u|}\bigg)^{\delta}
\exp\bigg(-\frac{|w^{-1}u|^2}{As}\bigg)\bigg(1+\frac{|w^{-1}u|}{\rho(u)}\bigg)^{-N}s^{\alpha/2-1}ds\\
&\leq C_{N,\alpha}\bigg(1+\frac{|w^{-1}u|}{\rho(u)}\bigg)^{-N}\bigg(\frac{|u^{-1}v|}{|w^{-1}u|}\bigg)^{\delta}
\int_{|w^{-1}u|^2}^{\infty}s^{\alpha/2-Q/2-1}ds\\
&\leq C_{N,\alpha}\bigg(1+\frac{|w^{-1}u|}{\rho(u)}\bigg)^{-N}\frac{|v^{-1}u|^{\delta}}{|w^{-1}u|^{Q-\alpha+\delta}},
\end{split}
\end{equation*}
where the last inequality holds since $|u^{-1}v|=|v^{-1}u|$ and $0<\alpha<Q$. On the other hand,
\begin{equation*}
\begin{split}
IV&\leq C_{N,\alpha}\int_0^{|w^{-1}u|^2}\frac{1}{s^{Q/2}}\cdot\bigg(\frac{|u^{-1}v|}{\sqrt{s\,}}\bigg)^{\delta}
\bigg(\frac{|w^{-1}u|^2}{s}\bigg)^{-(Q/2+N/2+\delta/2)}\bigg(1+\frac{\sqrt{s\,}}{\rho(u)}\bigg)^{-N}s^{\alpha/2-1}ds\\
&=C_{N,\alpha}\int_0^{|w^{-1}u|^2}\frac{|u^{-1}v|^{\delta}}{|w^{-1}u|^{Q+\delta}}\bigg(\frac{\sqrt{s\,}}{|w^{-1}u|}\bigg)^{N}
\bigg(1+\frac{\sqrt{s\,}}{\rho(u)}\bigg)^{-N}s^{\alpha/2-1}ds.
\end{split}
\end{equation*}
It is easy to check that when $0\leq s\leq|w^{-1}u|^2$,
\begin{equation*}
\frac{\sqrt{s\,}}{|w^{-1}u|}\leq\frac{\sqrt{s\,}+\rho(u)}{|w^{-1}u|+\rho(u)}.
\end{equation*}
This in turn implies that
\begin{equation*}
\begin{split}
IV&\leq C_{N,\alpha}\int_0^{|w^{-1}u|^2}\frac{|u^{-1}v|^{\delta}}{|w^{-1}u|^{Q+\delta}}\bigg(\frac{\sqrt{s\,}+\rho(u)}{|w^{-1}u|+\rho(u)}\bigg)^{N}
\bigg(\frac{\sqrt{s\,}+\rho(u)}{\rho(u)}\bigg)^{-N}s^{\alpha/2-1}ds\\
&=C_{N,\alpha}\cdot\frac{|u^{-1}v|^{\delta}}{|w^{-1}u|^{Q+\delta}}\bigg(1+\frac{|w^{-1}u|}{\rho(u)}\bigg)^{-N}\int_0^{|w^{-1}u|^2}s^{\alpha/2-1}ds\\
&=C_{N,\alpha}\bigg(1+\frac{|w^{-1}u|}{\rho(u)}\bigg)^{-N}\frac{|v^{-1}u|^{\delta}}{|w^{-1}u|^{Q-\alpha+\delta}},
\end{split}
\end{equation*}
where the last step holds because $|u^{-1}v|=|v^{-1}u|$. Combining the estimates of $III$ and $IV$ produces the desired inequality \eqref{WH2} for $\alpha\in(0,Q)$. This concludes the proof of the lemma.
\end{proof}

We are now in a position to give the proof of Theorem $\ref{mainthm:3}$.
\begin{proof}[Proof of Theorem $\ref{mainthm:3}$]
Fix a ball $B=B(u_0,r)$ with $u_0\in\mathbb H^n$ and $r\in(0,\infty)$, it suffices to prove that the following inequality
\begin{equation}\label{end1.1}
\frac{1}{|B|^{1+\beta/Q}}\int_B\big|\mathcal I_{\alpha}f(u)-(\mathcal I_{\alpha}f)_B\big|\,du\leq C\cdot\left(1+\frac{r}{\rho(u_0)}\right)^{\vartheta}
\end{equation}
holds for given $f\in L^{p,\kappa}_{\rho,\infty}(\mathbb H^n)$ with $1<p<q<\infty$ and $p/q\leq\kappa<1$, where $0<\alpha<Q$ and $(\mathcal I_{\alpha}f)_B$ denotes the average of $\mathcal I_{\alpha}f$ over $B$. Suppose that $f\in L^{p,\kappa}_{\rho,\theta}(\mathbb H^n)$ for some $\theta>0$. Decompose the function $f$ as $f=f_1+f_2$, where $f_1=f\cdot\chi_{4B}$, $f_2=f\cdot\chi_{(4B)^c}$, $4B=B(u_0,4r)$ and $(4B)^c=\mathbb H^n\backslash(4B)$. By the linearity of the $\mathcal L$-fractional integral operator $\mathcal I_{\alpha}$, the left-hand side of \eqref{end1.1} can be written as
\begin{equation*}
\begin{split}
&\frac{1}{|B|^{1+\beta/Q}}\int_B\big|\mathcal I_{\alpha}f(u)-(\mathcal I_{\alpha}f)_B\big|\,du\\
&\leq\frac{1}{|B|^{1+\beta/Q}}\int_B\big|\mathcal I_{\alpha}f_1(u)-(\mathcal I_{\alpha}f_1)_B\big|\,du
+\frac{1}{|B|^{1+\beta/Q}}\int_B\big|\mathcal I_{\alpha}f_2(u)-(\mathcal I_{\alpha}f_2)_B\big|\,du\\
&:=K_1+K_2.
\end{split}
\end{equation*}
First let us consider the term $K_1$. Applying the strong-type $(p,q)$ estimate of $\mathcal I_{\alpha}$ (see Theorem \ref{strong}) and H\"older's inequality, we obtain
\begin{equation*}
\begin{split}
K_1&\leq\frac{2}{|B|^{1+\beta/Q}}\int_B|\mathcal I_{\alpha}f_1(u)|\,du\\
&\leq\frac{2}{|B|^{1+\beta/Q}}\bigg(\int_B|\mathcal I_{\alpha}f_1(u)|^q\,du\bigg)^{1/q}\bigg(\int_B1\,du\bigg)^{1/{q'}}\\
&\leq\frac{C}{|B|^{1+\beta/Q}}\bigg(\int_{4B}|f(u)|^p\,du\bigg)^{1/p}|B|^{1/{q'}}\\
&\leq C\big\|f\big\|_{L^{p,\kappa}_{\rho,\theta}(\mathbb H^n)}
\cdot\frac{|B(u_0,4r)|^{{\kappa}/p}}{|B(u_0,r)|^{1/q+\beta/Q}}\left(1+\frac{4r}{\rho(u_0)}\right)^{\theta}.
\end{split}
\end{equation*}
Using the inequalities \eqref{homonorm} and \eqref{2rx}, and noting the fact that $\beta/Q=\kappa/p-1/q$, we derive
\begin{equation*}
\begin{split}
K_1&\leq C_n\big\|f\big\|_{L^{p,\kappa}_{\rho,\theta}(\mathbb H^n)}\left(1+\frac{4r}{\rho(u_0)}\right)^{\theta}\\
&\leq C_{n,\theta}\big\|f\big\|_{L^{p,\kappa}_{\rho,\theta}(\mathbb H^n)}\left(1+\frac{r}{\rho(u_0)}\right)^{\theta}.
\end{split}
\end{equation*}
Let us now turn to estimate the term $K_2$. For any $u\in B(u_0,r)$,
\begin{equation*}
\begin{split}
\big|\mathcal I_{\alpha}f_2(u)-(\mathcal I_{\alpha}f_2)_B\big|
&=\bigg|\frac{1}{|B|}\int_B\big[\mathcal I_{\alpha}f_2(u)-\mathcal I_{\alpha}f_2(v)\big]\,dv\bigg|\\
&=\bigg|\frac{1}{|B|}\int_B\bigg\{\int_{(4B)^c}\Big[\mathcal K_{\alpha}(u,w)-\mathcal K_{\alpha}(v,w)\Big]f(w)\,dw\bigg\}dv\bigg|\\
&\leq\frac{1}{|B|}\int_B\bigg\{\int_{(4B)^c}\big|\mathcal K_{\alpha}(u,w)-\mathcal K_{\alpha}(v,w)\big|\cdot|f(w)|\,dw\bigg\}dv.
\end{split}
\end{equation*}
By using the same arguments as that of Theorem \ref{mainthm:1}, we find that
\begin{equation*}
|v^{-1}u|\leq |w^{-1}u|/2 \quad \& \quad |w^{-1}u|\approx |w^{-1}u_0|,
\end{equation*}
whenever $u,v\in B$ and $w\in(4B)^c$. This fact along with Lemma \ref{kernel2} yields
\begin{align}\label{average}
&\big|\mathcal I_{\alpha}f_2(u)-(\mathcal I_{\alpha}f_2)_B\big|\notag\\
&\leq\frac{C_{N,\alpha}}{|B|}\int_B\bigg\{\int_{(4B)^c}\bigg(1+\frac{|w^{-1}u|}{\rho(u)}\bigg)^{-N}
\frac{|v^{-1}u|^{\delta}}{|w^{-1}u|^{Q-\alpha+\delta}}\cdot|f(w)|\,dw\bigg\}dv\notag\\
&\leq C_{N,\alpha,n}\int_{(4B)^c}\bigg(1+\frac{|w^{-1}u_0|}{\rho(u)}\bigg)^{-N}\frac{r^{\delta}}{|w^{-1}u_0|^{Q-\alpha+\delta}}\cdot|f(w)|\,dw\notag\\
&=C_{N,\alpha,n}\sum_{k=2}^\infty\int_{2^kr\leq|w^{-1}u_0|<2^{k+1}r}
\bigg(1+\frac{|w^{-1}u_0|}{\rho(u)}\bigg)^{-N}\frac{r^{\delta}}{|w^{-1}u_0|^{Q-\alpha+\delta}}\cdot|f(w)|\,dw\notag\\
&\leq C_{N,\alpha,n}\sum_{k=2}^\infty\frac{1}{2^{k\delta}}\cdot\frac{1}{|B(u_0,2^{k+1}r)|^{1-({\alpha}/Q)}}
\int_{B(u_0,2^{k+1}r)}\bigg(1+\frac{2^kr}{\rho(u)}\bigg)^{-N}|f(w)|\,dw.
\end{align}
Furthermore, by using H\"older's inequality and \eqref{com2}, we deduce that for any $u\in B(u_0,r)$,
\begin{align}\label{end1.3}
&\big|\mathcal I_{\alpha}f_2(u)-(\mathcal I_{\alpha}f_2)_B\big|\notag\\
&\leq C\sum_{k=2}^\infty\frac{1}{2^{k\delta}}\cdot\frac{1}{|B(u_0,2^{k+1}r)|^{1-({\alpha}/Q)}}
\left(1+\frac{r}{\rho(u_0)}\right)^{N\cdot\frac{N_0}{N_0+1}}
\left(1+\frac{2^{k+1}r}{\rho(u_0)}\right)^{-N}\notag\\
&\times\bigg(\int_{B(u_0,2^{k+1}r)}\big|f(w)\big|^p\,dw\bigg)^{1/p}
\left(\int_{B(u_0,2^{k+1}r)}1\,dw\right)^{1/{p'}}\notag\\
&\leq C\big\|f\big\|_{L^{p,\kappa}_{\rho,\theta}(\mathbb H^n)}
\sum_{k=2}^\infty\frac{1}{2^{k\delta}}\cdot\left(1+\frac{r}{\rho(u_0)}\right)^{N\cdot\frac{N_0}{N_0+1}}
\left(1+\frac{2^{k+1}r}{\rho(u_0)}\right)^{-N}\notag\\
&\times\frac{|B(u_0,2^{k+1}r)|^{{\kappa}/p}}{|B(u_0,2^{k+1}r)|^{1/q}}\left(1+\frac{2^{k+1}r}{\rho(u_0)}\right)^{\theta}\notag\\
&=C\big\|f\big\|_{L^{p,\kappa}_{\rho,\theta}(\mathbb H^n)}
\sum_{k=2}^\infty\frac{|B(u_0,2^{k+1}r)|^{\beta/Q}}{2^{k\delta}}\cdot\left(1+\frac{r}{\rho(u_0)}\right)^{N\cdot\frac{N_0}{N_0+1}}
\left(1+\frac{2^{k+1}r}{\rho(u_0)}\right)^{-N+\theta},
\end{align}
where the last equality is due to the assumption $\beta/Q=\kappa/p-1/q$. From the pointwise estimate \eqref{end1.3} and \eqref{homonorm}, it readily follows that
\begin{equation*}
\begin{split}
K_2&=\frac{1}{|B|^{1+\beta/Q}}\int_B\big|\mathcal I_{\alpha}f_2(u)-(\mathcal I_{\alpha}f_2)_B\big|\,du\\
&\leq C\big\|f\big\|_{L^{p,\kappa}_{\rho,\theta}(\mathbb H^n)}
\sum_{k=2}^\infty\frac{1}{2^{k\delta}}\cdot\left(\frac{|B(u_0,2^{k+1}r)|}{|B(u_0,r)|}\right)^{\beta/Q}
\left(1+\frac{r}{\rho(u_0)}\right)^{N\cdot\frac{N_0}{N_0+1}}
\left(1+\frac{2^{k+1}r}{\rho(u_0)}\right)^{-N+\theta}\\
&\leq C\big\|f\big\|_{L^{p,\kappa}_{\rho,\theta}(\mathbb H^n)}
\sum_{k=2}^\infty\frac{1}{2^{k(\delta-\beta)}}\cdot\left(1+\frac{r}{\rho(u_0)}\right)^{N\cdot\frac{N_0}{N_0+1}},
\end{split}
\end{equation*}
where $N>0$ is a sufficiently large number so that $N>\theta$. Also observe that $\beta<\delta\leq1$, and hence the last series is convergent. Therefore,
\begin{equation*}
K_2\leq C\big\|f\big\|_{L^{p,\kappa}_{\rho,\theta}(\mathbb H^n)}\left(1+\frac{r}{\rho(u_0)}\right)^{N\cdot\frac{N_0}{N_0+1}}.
\end{equation*}
Fix this $N$ and set $\vartheta=\max\big\{\theta,N\cdot\frac{N_0}{N_0+1}\big\}$. Finally, combining the above estimates for $K_1$ and $K_2$, the inequality \eqref{end1.1} is proved and then the proof of Theorem \ref{mainthm:3} is finished.
\end{proof}

In the end of this article, we discuss the corresponding estimates of the fractional integral operator $I_{\alpha}=(-\Delta_{\mathbb H^n})^{-\alpha/2}$ (under $0<\alpha<Q$). We denote by $K^{*}_{\alpha}(u,v)$ the kernel of $I_{\alpha}=(-\Delta_{\mathbb H^n})^{-\alpha/2}$. In \eqref{claim}, we have already shown that
\begin{equation}\label{WH3}
\big|K^{*}_{\alpha}(u,v)\big|\leq C_{\alpha,n}\cdot\frac{1}{|v^{-1}u|^{Q-\alpha}}.
\end{equation}
Using the same methods and steps as we deal with \eqref{WH2} in Lemma \ref{kernel2}, we can also show that for some fixed $0<\delta\leq 1$ and $0<\alpha<Q$, there exists a positive constant $C_{\alpha,n}>0$ such that for all $u,v$ and $w$ in $\mathbb H^n$, 
\begin{equation}\label{WH4}
\big|K^{*}_{\alpha}(u,w)-K^{*}_{\alpha}(v,w)\big|\leq C_{\alpha,n}\cdot\frac{|v^{-1}u|^{\delta}}{|w^{-1}u|^{Q-\alpha+\delta}},
\end{equation}
whenever $|v^{-1}u|\leq |w^{-1}u|/2$. Following along the lines of the proof of Theorems \ref{mainthm:1}--\ref{mainthm:3} and using the inequalities \eqref{WH3} and \eqref{WH4}, we can obtain the following estimates of $I_{\alpha}$ with $\alpha\in(0,Q)$.
\begin{thm}\label{thm:1}
Let $0<\alpha<Q$, $1<p<Q/{\alpha}$ and $1/q=1/p-{\alpha}/Q$. If $0<\kappa<p/q$, then the fractional integral operator $I_{\alpha}$ is bounded from $L^{p,\kappa}(\mathbb H^n)$ into $L^{q,{(\kappa q)}/p}(\mathbb H^n)$.
\end{thm}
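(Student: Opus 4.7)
The approach is to mirror the proof of Theorem \ref{mainthm:1}, specialized to the case $V \equiv 0$ so that the classical Morrey space $L^{p,\kappa}(\mathbb H^n)$ replaces $L^{p,\kappa}_{\rho,\infty}(\mathbb H^n)$, and using the clean kernel bound \eqref{WH3} in place of Lemma \ref{kernel}. Fix a ball $B = B(u_0, r)$ and decompose $f = f_1 + f_2$ with $f_1 = f \cdot \chi_{2B}$ and $f_2 = f \cdot \chi_{(2B)^c}$. By the linearity of $I_\alpha$, it suffices to estimate $I_i = \bigl(|B|^{-\kappa q/p} \int_B |I_\alpha f_i(u)|^q \, du\bigr)^{1/q}$ separately for $i = 1, 2$.

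For the local part, I would invoke the Hardy--Littlewood--Sobolev theorem on $\mathbb H^n$ (equivalently Theorems 4.4 and 4.5 of \cite{xiao}) together with the definition of the Morrey norm to get
\begin{equation*}
I_1 \leq \frac{C}{|B|^{\kappa/p}} \bigg(\int_{2B} |f(u)|^p \, du\bigg)^{1/p} \leq C \|f\|_{L^{p,\kappa}(\mathbb H^n)} \cdot \bigg(\frac{|2B|}{|B|}\bigg)^{\kappa/p} \leq C \|f\|_{L^{p,\kappa}(\mathbb H^n)},
\end{equation*}
where the final constant depends only on $Q$ and $\kappa/p$.

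For the far part, the argument follows the dyadic-annulus strategy already used for Theorem \ref{mainthm:1}, but with no $\rho$-weight complications. The standard observation $|v^{-1} u| \approx |v^{-1} u_0|$ for $u \in B$ and $v \in (2B)^c$, combined with \eqref{WH3}, yields
\begin{equation*}
|I_\alpha f_2(u)| \leq C \sum_{k=1}^\infty \frac{1}{|B(u_0, 2^{k+1} r)|^{1 - \alpha/Q}} \int_{B(u_0, 2^{k+1} r)} |f(v)| \, dv.
\end{equation*}
Applying H\"older's inequality with exponents $p$ and $p'$, invoking the Morrey norm, and using the relation $1/q = 1/p - \alpha/Q$ to collect powers of $2^{k+1} r$ gives the pointwise bound
\begin{equation*}
|I_\alpha f_2(u)| \leq C \|f\|_{L^{p,\kappa}(\mathbb H^n)} \sum_{k=1}^\infty \bigl(2^{k+1} r\bigr)^{Q(\kappa/p - 1/q)}.
\end{equation*}

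The hypothesis $\kappa < p/q$ is exactly what makes $\kappa/p - 1/q < 0$, so the geometric series converges and its sum is comparable to $r^{Q(\kappa/p - 1/q)} \approx |B|^{\kappa/p - 1/q}$. Raising to the $q$-th power, integrating over $B$, and dividing by $|B|^{\kappa q/p}$ then yields $I_2 \leq C \|f\|_{L^{p,\kappa}(\mathbb H^n)}$ as well. There is no genuine obstacle here; the result is a strict simplification of Theorem \ref{mainthm:1} obtained by dropping every $\rho$-dependent factor. The only two points that require care are the exponent bookkeeping which produces the correct target index $\kappa q / p$ on the $L^q$ side, and the strict inequality $\kappa < p/q$, which is precisely the threshold ensuring summability of the dyadic tail.
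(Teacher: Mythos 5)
Your proof is correct and is exactly the route the paper takes: the paper proves Theorem \ref{thm:1} only by remarking that one follows the proof of Theorem \ref{mainthm:1} with the kernel bound \eqref{WH3} in place of Lemma \ref{kernel} (dropping all $\rho$-dependent factors), which is precisely your argument, and your exponent bookkeeping for the dyadic tail under $\kappa<p/q$ checks out.
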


\begin{thm}\label{thm:2}
Let $0<\alpha<Q$, $p=1$ and $q=Q/{(Q-\alpha)}$. If $0<\kappa<1/q$, then the fractional integral operator $I_{\alpha}$ is bounded from $L^{1,\kappa}(\mathbb H^n)$ into $WL^{q,(\kappa q)}(\mathbb H^n)$.
\end{thm}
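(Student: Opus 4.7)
The plan is to mirror the argument of Theorem $\ref{mainthm:2}$, which has already been carried out in the more elaborate $\mathcal L$-setting. Since here no potential is present (formally $V\equiv0$ and $\theta=0$), the auxiliary function $\rho$ disappears, Lemma $\ref{N0}$ becomes unnecessary, and the kernel obeys only the clean size bound $\eqref{WH3}$. Every correction factor of the form $(1+r/\rho(u_0))^{\theta}$ collapses to $1$, and the geometric bookkeeping simplifies accordingly. The task reduces to showing that for every ball $B=B(u_0,r)$ in $\mathbb H^n$,
\[
\frac{1}{|B|^{\kappa}}\sup_{\lambda>0}\lambda\cdot\bigl|\{u\in B:|I_{\alpha}f(u)|>\lambda\}\bigr|^{1/q}\le C\|f\|_{L^{1,\kappa}(\mathbb H^n)}.
\]

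First I would decompose $f=f_1+f_2$ with $f_1=f\chi_{2B}$ and $f_2=f\chi_{(2B)^c}$, and split the quasinorm into pieces $J_1,J_2$ by subadditivity of the distribution function at level $\lambda/2$. For $J_1$, I would invoke the weak-type $(1,q)$ bound of $I_{\alpha}$ on Lebesgue spaces (the $V\equiv0$ case of Theorem $\ref{strong}$(2), i.e.\ the classical Hardy--Littlewood--Sobolev inequality on $\mathbb H^n$) to obtain $\lambda|\{u:|I_{\alpha}f_1(u)|>\lambda/2\}|^{1/q}\le C\|f_1\|_{L^1}\le C\|f\|_{L^{1,\kappa}}|2B|^{\kappa}$; the doubling identity $|2B|^{\kappa}/|B|^{\kappa}=2^{Q\kappa}$ then closes this piece by $C\|f\|_{L^{1,\kappa}}$.

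For $J_2$, I would use Chebyshev's inequality to dominate it by $2|B|^{-\kappa}\bigl(\int_B|I_{\alpha}f_2(u)|^q\,du\bigr)^{1/q}$, estimate $|I_{\alpha}f_2(u)|$ pointwise by combining $\eqref{WH3}$ with the elementary fact $|v^{-1}u|\approx|v^{-1}u_0|$ (valid for $u\in B$ and $v\notin 2B$), and decompose into dyadic annuli $\{2^kr\le|v^{-1}u_0|<2^{k+1}r\}$ for $k\ge1$. Applying the Morrey control $\int_{B(u_0,2^{k+1}r)}|f|\le\|f\|_{L^{1,\kappa}}|B(u_0,2^{k+1}r)|^{\kappa}$ inside each annulus and collecting the factors $|B(u_0,r)|^{1/q}$ and $|B(u_0,r)|^{-\kappa}$ yields the geometric sum
\[
J_2\le C\|f\|_{L^{1,\kappa}}\sum_{k=1}^{\infty}\left(\frac{|B(u_0,r)|}{|B(u_0,2^{k+1}r)|}\right)^{1/q-\kappa}.
\]

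The only analytic constraint is the convergence of this last series, which is precisely guaranteed by the hypothesis $\kappa<1/q$: the general term decays like $2^{-kQ(1/q-\kappa)}$, so the sum is controlled by a dimensional constant. Combining the bounds for $J_1$ and $J_2$ and then taking the supremum over $\lambda>0$ produces the desired inequality. In short, there is no substantive obstacle here; the argument is a streamlined specialization of the $\mathcal L$-case, with all $\rho$-dependent bookkeeping removed, and the strict inequality $\kappa<1/q$ is the single hypothesis actually used.
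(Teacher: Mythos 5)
Your proof is correct and follows exactly the route the paper itself prescribes: the paper proves Theorem \ref{thm:2} by specializing the argument of Theorem \ref{mainthm:2} to the potential-free kernel bound \eqref{WH3}, with the decomposition $f=f_1+f_2$, the weak-type Hardy--Littlewood--Sobolev bound for $f_1$, and Chebyshev plus dyadic annuli and the convergence of the geometric series under $\kappa<1/q$ for $f_2$, all as you describe. No substantive differences to report.
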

Here, we remark that Theorems \ref{thm:1} and \ref{thm:2} have been proved by Guliyev et al.\cite{guliyev}.
\begin{thm}\label{thm:3}
Let $0<\alpha<Q$, $1<p<Q/{\alpha}$ and $1/q=1/p-{\alpha}/Q$. If $p/q\leq\kappa<1$, then the fractional integral operator $I_{\alpha}$ is bounded from $L^{p,\kappa}(\mathbb H^n)$ into $\mathcal{C}^{\beta}(\mathbb H^n)$ with $\beta/Q=\kappa/p-1/q$ and $\beta<\delta\leq1$, where $\delta$ is given as in \eqref{WH4}.
\end{thm}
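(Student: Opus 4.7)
The plan is to follow the blueprint of Theorem \ref{mainthm:3} almost verbatim, with the simplifications that come from working without a potential: the auxiliary function $\rho$ disappears from the Morrey norm, and the decay factor $(1+|w^{-1}u|/\rho(u))^{-N}$ in the kernel estimate is absent (compare \eqref{WH3}--\eqref{WH4} with Lemmas \ref{kernel} and \ref{kernel2}). Fix a ball $B=B(u_0,r)$; by the definition of $\mathcal{C}^{\beta}(\mathbb H^n)$ it suffices to prove
\begin{equation*}
\frac{1}{|B|^{1+\beta/Q}}\int_B \bigl|I_\alpha f(u)-(I_\alpha f)_B\bigr|\,du \leq C\,\|f\|_{L^{p,\kappa}(\mathbb H^n)}.
\end{equation*}

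First I would decompose $f=f_1+f_2$ with $f_1=f\chi_{4B}$ and $f_2=f\chi_{(4B)^c}$, so that the left-hand side splits as $K_1+K_2$. For $K_1$, I would drop the average by the triangle inequality, apply H\"older's inequality to introduce a factor $|B|^{1/q'}$, and invoke the classical Hardy--Littlewood--Sobolev boundedness of $I_\alpha$ on $\mathbb H^n$ (Theorems 4.4--4.5 in \cite{xiao}). Combining this with the Morrey condition on $f$ over $4B$ and the algebraic identity $1/q+\beta/Q=\kappa/p$ makes the volume factors collapse, yielding $K_1\leq C\|f\|_{L^{p,\kappa}(\mathbb H^n)}$.

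For $K_2$, I would rewrite $I_\alpha f_2(u)-(I_\alpha f_2)_B=\frac{1}{|B|}\int_B\!\int_{(4B)^c}\bigl[K^{*}_\alpha(u,w)-K^{*}_\alpha(v,w)\bigr]f(w)\,dw\,dv$ and exploit the geometric facts $|v^{-1}u|\leq|w^{-1}u|/2$ and $|w^{-1}u|\approx|w^{-1}u_0|$ (valid for $u,v\in B$ and $w\in(4B)^c$, exactly as in the proofs of Theorems \ref{mainthm:1} and \ref{mainthm:3}). The kernel regularity estimate \eqref{WH4} then yields the pointwise bound $C\int_{(4B)^c} r^{\delta}|w^{-1}u_0|^{-(Q-\alpha+\delta)}|f(w)|\,dw$. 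I would partition $(4B)^c$ into dyadic annuli $\{2^kr\leq|w^{-1}u_0|<2^{k+1}r\}_{k\geq 2}$, apply H\"older's inequality on each piece, and use the Morrey norm of $f$ to extract, on the $k$-th term, a factor $2^{-k\delta}\cdot|B(u_0,2^{k+1}r)|^{\beta/Q}$ (this is where $\beta/Q=\kappa/p-1/q$ is used). Dividing by $|B|^{\beta/Q}$ converts this into $2^{-k(\delta-\beta)}$.

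The only delicate point is the convergence of the resulting geometric series $\sum_{k\geq 2}2^{-k(\delta-\beta)}$, which holds precisely because the hypothesis $\beta<\delta$ is imposed in the statement; the accompanying condition $\kappa\geq p/q$ ensures $\beta\geq 0$, so one genuinely lands in a H\"older-type (or BMO-type, when $\beta=0$) space. This is less a technical obstacle than a constraint that explains why the restriction $\beta<\delta$ must appear. Combining the bounds on $K_1$ and $K_2$ gives the required inequality, with a constant depending only on $p$, $\kappa$, $\alpha$, $n$ and on $\delta$.
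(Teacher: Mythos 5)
Your proposal is correct and is exactly the argument the paper intends: the paper proves Theorem \ref{thm:3} only by pointing to the proof of Theorem \ref{mainthm:3} with the kernel bounds \eqref{WH3}--\eqref{WH4} in place of Lemmas \ref{kernel} and \ref{kernel2}, and your decomposition $f=f\chi_{4B}+f\chi_{(4B)^c}$, the Hardy--Littlewood--Sobolev plus H\"older treatment of the local part, and the dyadic-annulus estimate producing $\sum_k 2^{-k(\delta-\beta)}$ reproduce that argument faithfully (including the correct use of $\beta/Q=\kappa/p-1/q$ and the role of $\beta<\delta$).
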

As an immediate consequence we have the following corollary.
\begin{cor}
Let $0<\alpha<Q$, $1<p<Q/{\alpha}$ and $1/q=1/p-{\alpha}/Q$. If $\kappa=p/q$, then the fractional integral operator $I_{\alpha}$ is bounded from $L^{p,\kappa}(\mathbb H^n)$ into $\mathrm{BMO}(\mathbb H^n)$.
\end{cor}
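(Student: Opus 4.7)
The plan is to deduce this corollary directly from Theorem \ref{thm:3} by specializing to the endpoint $\kappa = p/q$. Under this choice, the relation $\beta/Q = \kappa/p - 1/q$ forces $\beta = 0$, and the paper explicitly notes that $\mathcal{C}^{0}(\mathbb H^n)$ coincides (by definition) with $\mathrm{BMO}(\mathbb H^n)$. Thus the target inequality to verify for a fixed ball $B = B(u_0,r)$ is
\begin{equation*}
\frac{1}{|B|}\int_{B}\bigl|I_{\alpha}f(u)-(I_{\alpha}f)_{B}\bigr|\,du
\;\leq\; C\,\|f\|_{L^{p,\kappa}(\mathbb H^n)},
\end{equation*}
which is exactly the $\beta=0$ version of the estimate proved for Theorem \ref{thm:3} using the kernel bounds \eqref{WH3} and \eqref{WH4}.

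For completeness, I would rerun the scheme of Theorem \ref{mainthm:3}/\ref{thm:3} at $\beta=0$. First, decompose $f = f_{1}+f_{2}$ with $f_{1}=f\chi_{4B}$ and $f_{2}=f\chi_{(4B)^{c}}$, then split the left-hand side into the local piece $K_{1}$ involving $f_{1}$ and the far piece $K_{2}$ involving $f_{2}$. To control $K_{1}$, replace $|(I_{\alpha}f_{1})_{B}|$ by the triangle inequality, apply H\"older's inequality with exponent $q$, and use the strong $(p,q)$ boundedness of $I_{\alpha}$ from Theorem \ref{strong}. This produces a factor $|4B|^{\kappa/p}/|B|^{1/q}$, and the endpoint relation $\kappa=p/q$ is precisely what makes this ratio a dimensional constant, which is the whole reason this case gives a BMO bound rather than a true Hölder modulus.

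For $K_{2}$, write $I_{\alpha}f_{2}(u)-I_{\alpha}f_{2}(v)$ as an integral of $K^{*}_{\alpha}(u,w)-K^{*}_{\alpha}(v,w)$ against $f(w)$, and observe that for $u,v\in B$ and $w\in(4B)^{c}$ we have $|v^{-1}u|\leq 2r \leq |w^{-1}u|/2$, so that the smoothness estimate \eqref{WH4} applies and also $|w^{-1}u|\approx |w^{-1}u_{0}|$. Slice $(4B)^{c}$ into annuli $\{2^{k}r\leq |w^{-1}u_{0}|<2^{k+1}r\}$ for $k\geq 2$, use H\"older with the definition of $L^{p,\kappa}(\mathbb H^n)$, and invoke again the identity $\beta/Q=\kappa/p-1/q=0$ to see that the $|B(u_{0},2^{k+1}r)|$-powers cancel completely. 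What remains is the geometric series $\sum_{k\geq 2} 2^{-k\delta}$, which converges because $\delta>0$ — crucially, one needs no further smallness of $\beta$ here, so the proof goes through untouched at the endpoint.

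There is no serious obstacle: the argument is a clean specialization of the one already carried out for Theorem \ref{thm:3}, and the only point requiring a moment's thought is confirming that $\beta=0$ does not degrade the convergence of the tail series. Once that is noted, assembling the bounds on $K_{1}$ and $K_{2}$ yields the BMO estimate and concludes the proof.
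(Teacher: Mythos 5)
Your proposal matches the paper's route exactly: the corollary is obtained by specializing Theorem \ref{thm:3} to $\kappa=p/q$, so that $\beta/Q=\kappa/p-1/q=0$ and $\mathcal{C}^{0}(\mathbb H^n)$ is by definition $\mathrm{BMO}(\mathbb H^n)$; your re-run of the $K_1$/$K_2$ decomposition at $\beta=0$ is just the proof of Theorem \ref{mainthm:3}/\ref{thm:3} again, with the tail series $\sum_k 2^{-k\delta}$ still convergent since $\delta>0$. Nothing further is needed.
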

Upon taking $\alpha=1$ in Theorem \ref{thm:3}, we get the following \textbf{Morrey's lemma} on the Heisenberg group. 
\begin{cor} 
Let $\alpha=1$, $1<p<Q$ and $1/q=1/p-1/Q$. If $p/q<\kappa<1$, then the fractional integral operator $I_{1}$ is bounded from $L^{p,\kappa}(\mathbb H^n)$ into $\mathcal{C}^{\beta}(\mathbb H^n)$ with $\beta/Q=\kappa/p-1/q$ and $\beta<\delta\leq1$, where $\delta$ is given as in \eqref{WH4}. Namely,
\begin{equation*}
\big\|\nabla_{\mathbb H^n}f\big\|_{\mathcal{C}^{\beta}(\mathbb H^n)}\leq C\big\|f\big\|_{L^{p,\kappa}(\mathbb H^n)},
\end{equation*}
where $0<\kappa<1$, $p>(1-\kappa)Q$, $\beta=1-{(1-\kappa)Q}/p$ and the gradient $\nabla_{\mathbb H^n}$ is defined by
\begin{equation*}
\nabla_{\mathbb H^n}=\big(X_1,\dots,X_n,Y_1,\dots,Y_n\big).
\end{equation*}
\end{cor}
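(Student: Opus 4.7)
The plan is to derive this corollary as an immediate specialization of Theorem \ref{thm:3} to the exponent $\alpha = 1$, after a short parameter check, and then to recast the resulting mapping property of $I_1$ in the gradient form appearing in the ``Namely'' inequality via the Riesz-transform factorization of $\nabla_{\mathbb H^n}$ on $\mathbb H^n$.

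First I would verify the parameter conversion. Setting $\alpha = 1$ in Theorem \ref{thm:3}, the hypothesis $1 < p < Q/\alpha$ becomes $1 < p < Q$, the Hardy--Littlewood--Sobolev balance $1/q = 1/p - 1/Q$ is preserved, and the Morrey exponent range $p/q \leq \kappa < 1$ is required. Using $p/q = 1 - p/Q$, the strict inequality $p/q < \kappa$ is equivalent to $p > (1-\kappa)Q$, matching the hypothesis of the corollary. The Hölder exponent satisfies $\beta/Q = \kappa/p - 1/q = \kappa/p - 1/p + 1/Q$, so $\beta = 1 - (1-\kappa)Q/p$, also matching. The constraint $\beta < \delta \leq 1$ is inherited directly from Theorem \ref{thm:3} with $\delta$ given by \eqref{WH4}. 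Invoking Theorem \ref{thm:3} then yields $\|I_1 g\|_{\mathcal C^\beta(\mathbb H^n)} \leq C \|g\|_{L^{p,\kappa}(\mathbb H^n)}$ for all $g \in L^{p,\kappa}(\mathbb H^n)$, which is the main mapping assertion of the corollary.

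Second, to recover the ``Namely'' gradient inequality $\|\nabla_{\mathbb H^n} f\|_{\mathcal C^\beta(\mathbb H^n)} \leq C\|f\|_{L^{p,\kappa}(\mathbb H^n)}$, I would invoke the operator identity $\nabla_{\mathbb H^n} = R \circ (-\Delta_{\mathbb H^n})^{1/2}$ on $\mathbb H^n$, where $R = (R_1, \dots, R_{2n})$ is the vector of horizontal first-order Riesz transforms $R_j = X_j (-\Delta_{\mathbb H^n})^{-1/2}$ and $R_{n+j} = Y_j (-\Delta_{\mathbb H^n})^{-1/2}$. These are standard Calderón--Zygmund operators on $\mathbb H^n$ and are bounded on Morrey spaces $L^{p,\kappa}(\mathbb H^n)$. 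Writing $f = I_1 g$ with $g = (-\Delta_{\mathbb H^n})^{1/2} f$ so that $\nabla_{\mathbb H^n} f = R g$, one combines the Hölder bound on $I_1 g$ from the first step with the Morrey boundedness of $R$ to transfer the estimate into the gradient form, identifying the left-hand side $\|\nabla_{\mathbb H^n} f\|_{\mathcal C^\beta}$ with $\|R g\|_{\mathcal C^\beta}$ and the right-hand side $\|f\|_{L^{p,\kappa}}$ with $\|I_1 g\|_{L^{p,\kappa}}$ through the factorization.

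The main technical obstacle is the second step, specifically the careful composition of the Riesz transforms with $I_1$ at the level of Morrey and Hölder norms. The parameter matching is routine algebra, and Theorem \ref{thm:3} does the heavy analytic lifting; the work in the ``Namely'' restatement lies entirely in the Riesz-transform bookkeeping on the Heisenberg group, where the non-commutativity of the vector fields $X_j, Y_j$ produces lower-order commutator terms that must be absorbed, and one must ensure that the $L^{p,\kappa}$-boundedness of horizontal Riesz transforms combines cleanly with the $L^{p,\kappa}\to\mathcal C^\beta$ mapping of $I_1$ to yield the stated Morrey's-lemma form.
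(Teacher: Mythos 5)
Your first step---setting $\alpha=1$ in Theorem \ref{thm:3} and checking that $p/q<\kappa$ is equivalent to $p>(1-\kappa)Q$ and that $\beta/Q=\kappa/p-1/q$ gives $\beta=1-(1-\kappa)Q/p$---is exactly the paper's route; the paper offers nothing beyond this specialization, so up to that point you have reproduced its (one-line) argument. The gap is in your second step, the derivation of the ``Namely'' inequality. First, the displayed inequality $\|\nabla_{\mathbb H^n}f\|_{\mathcal{C}^{\beta}}\leq C\|f\|_{L^{p,\kappa}}$ is, as literally written, a misprint for the classical Morrey lemma $\|f\|_{\mathcal{C}^{\beta}}\leq C\|\nabla_{\mathbb H^n}f\|_{L^{p,\kappa}}$: a Morrey bound on $f$ cannot control a H\"older norm of $\nabla_{\mathbb H^n}f$ (that would be a gain of more than one derivative), whereas the stated exponent $\beta=1-(1-\kappa)Q/p$ is precisely the one in the classical statement ``$\nabla f\in L^{p,\lambda}$ implies $f\in C^{0,\beta}$.'' You took the printed inequality at face value and tried to prove it, which cannot succeed.

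Second, even setting aside which direction is intended, your proposed chain does not close. Writing $g=(-\Delta_{\mathbb H^n})^{1/2}f$, so that $f=I_1g$ and $\nabla_{\mathbb H^n}f=Rg$, Theorem \ref{thm:3} gives $\|I_1g\|_{\mathcal{C}^{\beta}}\leq C\|g\|_{L^{p,\kappa}}$, i.e.\ it controls $\|f\|_{\mathcal{C}^{\beta}}$ by $\|g\|_{L^{p,\kappa}}$; it says nothing about $\|Rg\|_{\mathcal{C}^{\beta}}$, and the $L^{p,\kappa}\to L^{p,\kappa}$ boundedness of the Riesz transforms cannot be applied to a $\mathcal{C}^{\beta}$ norm. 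Moreover the quantity $\|I_1g\|_{L^{p,\kappa}}=\|f\|_{L^{p,\kappa}}$ that you want on the right-hand side never appears as an upper bound anywhere in Theorem \ref{thm:3}. The correct bookkeeping runs the other way: from $-\Delta_{\mathbb H^n}=\sum_j(X_j^2+Y_j^2)$ one writes $f=-\sum_j I_1\widetilde{R}_j(X_jf)-\sum_j I_1\widetilde{S}_j(Y_jf)$ with $\widetilde{R}_j=(-\Delta_{\mathbb H^n})^{-1/2}X_j$, $\widetilde{S}_j=(-\Delta_{\mathbb H^n})^{-1/2}Y_j$ the adjoint Riesz transforms; then Theorem \ref{thm:3} applied to each term, together with the $L^{p,\kappa}$-boundedness of $\widetilde{R}_j,\widetilde{S}_j$, yields $\|f\|_{\mathcal{C}^{\beta}}\leq C\|\nabla_{\mathbb H^n}f\|_{L^{p,\kappa}}$. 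That is the statement the corollary is actually asserting, and it is the version your factorization should be aimed at.
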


\end{document}